\documentclass{jgcc}

\keywords{HNN-extensions, hyperbolic groups, knapsack problems, exponent equations}

\pdfoutput=1
\usepackage{lastpage}
\jgccdoi{14}{2}{1}{10521}  
\jgccheading{}{\pageref{LastPage}}{}{}%
{Sept.~26,~2022}{Dec.~27,~2022}{}    

\usepackage[utf8]{inputenc}
\usepackage[english]{babel}
\usepackage[T1]{fontenc}
\usepackage{amsmath}
\usepackage{amsthm}
\usepackage{amssymb}
\usepackage{tikz}
\usetikzlibrary{arrows}
\usetikzlibrary{decorations.markings}
\usetikzlibrary{calc}
\usetikzlibrary{patterns}
\usetikzlibrary{calc}
\usepackage{float}
\usepackage{hyperref}

\newcommand{\N}{\mathbb{N}}
\newcommand{\Z}{\mathbb{Z}}
\newcommand{\E}{e}
\newcommand{\BR}{\mathsf{BR}}
\newcommand{\Sol}{\mathsf{sol}}
\newcommand{\DHB}{H_3(\Z)}
\newcommand{\rest}{\mathord\restriction}

\begin{document}

\title{Exponent equations in HNN-extensions}

\author[M.~Figelius]{Michael Figelius}
\address{Department of Electrical Engineering and Computer Science, University of Siegen, Germany}	
\email{figelius@eti.uni-siegen.de}

\author[M.~Lohrey]{Markus Lohrey}
\address{Department of Electrical Engineering and Computer Science, University of Siegen, Germany}	
\email{lohrey@eti.uni-siegen.de}

\begin{abstract}
We consider exponent equations in finitely generated groups. These are equations, where the variables appear as exponents of group elements and take
values from the natural numbers. Solvability of such (systems of) equations has been intensively studied for various classes of groups in recent years.
In many cases, it turns out that the set of all solutions on an exponent equation is a semilinear set that can be constructed effectively. Such groups are
called knapsack semilinear. Examples of knapsack semilinear groups are 
hyperbolic groups, virtually special groups, co-context-free groups and free solvable groups. Moreover, knapsack semilinearity is preserved by many group theoretic
constructions, e.g., finite extensions, graph products, wreath products, amalgamated free products with finite amalgamated subgroups, and  
HNN-extensions with finite associated subgroups. On the other hand, arbitrary HNN-extensions do not preserve knapsack semilinearity. 
In this paper, we consider the knapsack semilinearity of HNN-extensions, where the stable letter $t$ acts trivially by conjugation on 
 the associated subgroup $A$ of the base group $G$.
We show that under some additional technical conditions, knapsack semilinearity transfers
from base group $G$ to the HNN-extension $H$. These additional technical conditions are satisfied in many cases, e.g., when 
$A$ is a centralizer in $G$ or $A$ is a quasiconvex subgroup of the hyperbolic group $G$.
\end{abstract}

\maketitle

\section{Introduction}

For an infinite finitely generated group $G$ we consider equations of the form 
\begin{equation} \label{eq-exp-eq}
h_0 g_0^{x_1} h_1 g_1^{x_2} \cdots h_n g_n^{x_n} h_{n+1} = 1
\end{equation}
where the $g_i$ and $h_i$ are given elements of $G$ and the $x_i$ are variables that range over $\mathbb{N}$ (in this paper, the natural numbers always include $0$). In general, it is allowed that
$x_i = x_j$ for $i \neq j$.
Equations of this form are known as exponent equations and have received a lot
of attention in recent years, see e.g. \cite{BabaiBCIL96,BergstrasserGZ21,bier2021exponential,bogopolski2021notes,DuTr18,FigeliusGLZ20,FiLoZe2021,FrenkelNU14,GanardiKLZ18,KoenigLohreyZetzsche2015a,LOHREY2019,LohreyZ18,LohreyZ20,MiTr17,MyNiUs14}. Several variants and problems have been studied in this context. The most general decision problem is to decide whether a given system of exponential equations has a solution where natural numbers 
are assigned to the variables $x_i$. This problem is known to be decidable in hyperbolic groups \cite{LOHREY2019}, free solvable groups \cite{FigeliusGLZ20},
co-context-free groups (groups where the complement of the word problem is context-free), and virtually special groups (finite extensions
of subgroups of right-angled Artin groups).\footnote{We will state in a moment more general results which allow to decide whether a  
system of exponential equations has a solution in a co-context-free group or a virtually special group.} 
Many groups are known to be virtually special, e.g., Coxeter groups, fully residually free groups, one-relator groups with torsion, and
fundamental groups of hyperbolic 3-manifolds.

A simpler problem is the so-called knapsack problem: In this problem the input is a single equation of the form \eqref{eq-exp-eq},
where all $x_i$ are pairwise different variables, and it is asked whether a solution exists. There are groups with a decidable knapsack problem but an undecidable solvability problem for systems of exponent 
equations. Examples are the discrete Heisenberg group \cite{KoenigLohreyZetzsche2015a} and the Baumslag-Solitar group $\mathsf{BS}(1,2)$ \cite[Theorem E.1]{BeGaZe-arxiv}.
Let us also remark that the variants of these problems, where the variables $x_i$ range over $\mathbb{Z}$ are not harder, since one can replace a power $g_i^{x_i}$ with 
$x_i$ ranging over $\mathbb{Z}$ by $g_i^{x_i} (g^{-1}_i)^{y_i}$ with 
$x_i,y_i$ ranging over $\mathbb{N}$.

Another problem is to describe the set of all solutions of an equation \eqref{eq-exp-eq}. It turned out that for many groups this set is effectively semilinear for every 
exponent equation;\footnote{A subset of $\mathbb{N}^n$ is semilinear if it is a finite union of sets of shifted subsemigroups of $(\mathbb{N}^n,+)$.
Effectively semilinear means that from a given exponent equation one can effectively compute the finitely many vectors from a semilinear description of 
the solution set. More details on these definitions can be found in Section~\ref{sec-semi-lin}.} such groups are called {\em knapsack semilinear}. 
First of all, for a knapsack semilinear group one can decide whether a given system of exponent equations has a solution; see \cite[Lemma 3.2]{GanardiKLZ18}. Hyperbolic groups, virtually special groups, co-context-free groups, and free solvable groups are all knapsack semilinear;
see \cite[Theorem 8.1]{LOHREY2019} for hyperbolic groups,  
\cite{GanardiKLZ18} for free solvable groups, 
\cite{KoenigLohreyZetzsche2015a} for co-context-free groups,\footnote{Knapsack semilinearity of co-context-free groups is not explicitly shown in \cite{KoenigLohreyZetzsche2015a} but follows directly from the decidability proof in \cite{KoenigLohreyZetzsche2015a}  for the knapsack problem in a co-context-free group.} 
and  \cite{FiLoZe2021} for virtually special groups. In fact, it is shown in  \cite{FiLoZe2021}  that the class of knapsack semilinear groups is closed under
finite extensions, graph products, amalgamated free products with finite amalgamated subgroups, and  HNN-extensions with finite associated subgroups.
In addition, it is shown in \cite{GanardiKLZ18} that the class of knapsack semilinear groups is closed under wreath products, which implies that
free solvable groups are knapsack semilinear (by Magnus embedding theorem a free solvable group can be embedded in an iterated wreath product
of free abelian groups and the latter are knapsack semilinear).

In this paper we want to further elaborate HNN-extensions. HNN-extensions are a fundamental operation in all areas of geometric and combinatorial group theory.
A theorem of Seifert and van Kampen links HNN-extensions to
algebraic topology. Moreover, HNN-extensions are used in all modern proofs for the undecidability of the word problem
in finitely presented groups. For a base group $G$ with two isomorphic subgroups 
$A$ and $B$ and an isomorphism $\varphi \colon A \to B$, the corresponding
HNN-extension is the group
\begin{equation} \label{HNN}
H=\langle G,t \mid  t^{-1} a t = \varphi(a) \, (a \in A) \rangle.
\end{equation}
Intuitively, it is obtained by adjoing to $G$ a new generator $t$ (the {\em stable letter}) 
in such a way that conjugation of $A$ by $t$ realizes 
$\varphi$. The subgroups $A$ and $B$ are also 
called the {\em associated subgroups}. Recall from the above discussion that if $G$ is knapsack semilinear and 
$A$ and $B$ are finite then also $H$ is knapsack semilinear  \cite{FiLoZe2021}. For arbitrary HNN-extensions, this
is not true. For instance, the Baumslag-Solitar group $\mathsf{BS}(1,2) = \langle a,t \mid  t^{-1} a t = a^2\rangle$ is not
knapsack semilinear \cite{LohreyZ20} but it is an HNN-extension of the knapsack semilinear group $\langle a \rangle \cong \mathbb{Z}$.
This example shows that we have to drastically restrict HNN-extensions in order to get a transfer result for knapsack semilinearity
beyond the case of finite associated subgroups. In this paper we study HNN-extensions of the form
\begin{equation} \label{HNN-simple}
H=\langle G,t \mid  t^{-1} a t = a \, (a \in A) \rangle,
\end{equation}
where $A \leq H$ is a subgroup. In other words, we take in \eqref{HNN}  for $\varphi : A \to B$ the identity on $A$.
Intuitively: we add to the group $G$ a free generator $t$ together with commutation identities  $at = ta$ for all $a \in A$.
This operation interpolates between the free product $G * \langle t \rangle \cong G * \mathbb{Z}$ and the direct product
$G \times \langle t \rangle \cong G \times \mathbb{Z}$.

Even HNN-extensions of the form \eqref{HNN-simple} with f.g.~$A$ are too general for our purpose: 
if the subgroup membership problem for $A$ is undecidable then $H$ has an undecidable
word problem. Hence, we also need some restriction on the subgroup $A \leq G$. We say that $G$ is knapsack semilinear
relative to the subgroup $A$ if for every expression of the form 
$h_0 g_0^{x_1} h_1 g_1^{x_2} \cdots h_n g_n^{x_n} h_{n+1}$ (with $g_i, h_i \in G$) the set of all tuples
$(c_1, \ldots, c_n) \in \mathbb{N}^n$  such that $h_0 g_0^{c_1} h_1 g_1^{c_2} \cdots h_n g_n^{c_n} h_{n+1} \in A$
is a semilinear set. Our main result states that if the group $G$ is (i) knapsack semilinear as well as (ii) knapsack semilinear relative
to the subgroup $A$, then the HNN-extension $H$ in \eqref{HNN-simple} is knapsack semilinear. In some situations we can even avoid
the explicit assumption that $G$ is knapsack semilinear relative
to the subgroup $A$.  HNN-extensions of the form \eqref{HNN-simple}, where $A$ is the centralizer
of a single element $g \in G$ are known as {\em free rank one extensions of centralizers} and were first studied in \cite{MyasnikovR96} in the 
context of so-called exponential groups. It is easy to observe that if $G$ is knapsack semilinear and $A \leq G$ is the centralizer
of a finite set of elements, then $G$ is also knapsack semilinear relative to $A$. In particular the operation of free rank one extension of centralizers
preserves knapsack semilinearity. A corollary of this result is that every fully residually free group is knapsack semilinear. 
The class of fully residually free groups is exactly the class of all groups that can be constructed from $\mathbb{Z}$ by the following operations:
taking finitely generated subgroups, free products and free rank one extensions of centralizers.
Knapsack semilinearity of fully residually free groups also follows from the fact that every fully residually free group is virtually special \cite{Wis09}.

In the second part of the paper, we study HNN-extensions of the form \eqref{HNN-simple}, where $G$ is a hyperbolic group.
A group is hyperbolic if 
all geodesic triangles in the Cayley-graph are $\delta$-slim for a constant $\delta$. The class of hyperbolic groups has
several alternative characterizations (e.g., it is the class of finitely generated groups with a linear Dehn function), which
gives hyperbolic groups a prominent role in geometric group theory.  Moreover, in a certain probabilistic sense, almost all finitely presented groups are hyperbolic \cite{Gro87,Olsh92}.
 Also from a computational viewpoint, hyperbolic groups have nice properties: it is known that the word problem and the conjugacy 
 problem can be solved in linear time  \cite{EpsteinH06,Hol00}. In \cite{LOHREY2019} it was shown that hyperbolic groups are knapsack semilinear.
 Here we extend this result by showing that a hyperbolic group is knapsack semilinear relative to a quasiconvex subgroup.
Quasiconvex subgroups in hyperbolic groups are known to have nice properties. Many algorithmic problems are decidable for quasiconvex subgroups, including the membership problem \cite{KhMiWe17}, whereas
Rips constructed finitely generated subgroups of hyperbolic groups with an undecidable membership problem \cite{Rip82b}.

A short version of this paper was presented at the conference ISSAC 2022 \cite{FigeliusL22}.

\section{Preliminaries}

In the following three subsections we introduce some definitions concerning semilinear sets, finite automata, and groups.

\subsection{Semilinear sets} \label{sec-semi-lin}

Fix a dimension $d \ge 1$.
We extend the operations of vector addition and multiplication of a vector by a matrix to sets of vectors in the obvious way.
A {\em linear subset} of $\N^d$ is a set of the form
$$
L= L(b,P) = b + P \cdot \N^k,
$$
where $b \in \N^d$ and $P \in \N^{d \times k}$.
A subset $S\subseteq \N^d$ is \emph{semilinear}, if it is a finite union of linear sets. 
The class of semilinear sets is known to be effectively closed under boolean operations; quantitative results on the descriptional
complexity of boolean operations on semi-linear sets can be found in \cite{Beier}.
If a semilinear set $S$ is given as a union $\bigcup_{1 \le i \le k} L(b_i,P_i)$, we call the tuple $\mathcal{R} = (b_1, P_1, \ldots, b_k, P_k)$
a \emph{semilinear representation} of $S$.

In the context of knapsack semilinearity (which will be introduced in a moment), we will consider 
semilinear subsets as sets of mappings $f : \{x_1, \ldots, x_d\} \to \N$ for a finite set of variables
$U = \{x_1, \ldots, x_d\}$. Such a mapping $f$ can be identified with the vector $(f(x_1), \ldots, f(x_d))^{\mathsf{T}}$.
This allows to use all vector operations (e.g. addition and scalar multiplication) on the set
$\N^U$ of all mappings from $U$ to $\N$. In general, if $\ast$ is a binary operation on $\N$ (we only use addition or multiplication for $\ast$)
we denote with $f \ast g$ (for $f,g \in \N^U$) the pointwise extension of the operation $\ast$ to $\N^U$, i.e.,
$(f \ast g)(x) = f(x) \ast g(x)$ for all $x \in U$. Moreover, for mappings
$f \in \N^U$, $g \in \N^V$ with $U \cap V = \emptyset$ we define $f \oplus g \in \N^{U \cup V}$
by $(f \oplus g)(x) = f(x)$ for $x \in U$ and  $(f \oplus g)(y) = g(y)$ for $y \in V$. All operations on
$\N^U$ will be extended to subsets of $\N^U$ in the standard pointwise way.
For $L \subseteq \N^U$ and $V \subseteq U$ we denote with
$L\rest_{V}$ the set $\{ f\rest_{V} \mid f \in L \}$ obtained by restricting every function $f \in L$ to the 
subset $V$ of its domain. It is easy to see that the operations $\oplus$ and $\rest_V$ preserve semilinearity in an effective way.

The semilinear sets are exactly those sets that are definable in first-order logic over the structure $(\mathbb{N},+)$
(the so-called {\em Presburger definable sets}). 
All the above mentioned closure properties of semilinear sets follow from this characterization.
A good survey on semilinear results and Presburger arithmetic with references for the above mentioned results is \cite{Haase18}.

\subsection{Regular languages, rational relations, and Parikh images}

More details on finite automata can be found in the standard textbook \cite{HoUl79}.
Let $\Sigma$ be a finite alphabet of symbols. As usual, $\Sigma^*$ denotes the set of all finite words
over the alphabet $\Sigma$. For a word $w = a_1 a_2 \cdots a_n$ with $a_1, \ldots, a_n \in \Sigma$
we denote with $|w|=n$ the length of $w$. We denote the empty word (the unique word of length $0$) with $\varepsilon$;
in group theoretic contexts we also write $1$ for the empty word. A {\em factor} of a word $w \in \Sigma^*$ is any word
$u$ such that $w = s u v $ for word some words $s,v$.

A {\em finite automaton} over the alphabet $\Sigma$ is a tuple $\mathcal{A} = (Q, I, \delta, F)$, where 
$Q$ is a finite set of states, $I \subseteq Q$ is the set of initial states, $\delta \subseteq Q \times \Sigma \times Q$ 
is the set of transitions, and $F \subseteq Q$ is the set of final states. A word $w = a_1 a_2 \cdots a_n$ is accepted
by $\mathcal{A}$ if there are transitions $(q_{i-1}, a_i, q_i) \in \delta$ for $1 \leq i \leq n$ such that
$q_0 \in I$ and $q_n \in F$. With $L(\mathcal{A})$ (the language accepted by $\mathcal{A}$) we denote the 
set of all words accepted by $\mathcal{A}$. A language $L$ is called {\em regular} if it is accepted by a 
finite automaton. 

We fix an arbitrary enumeration $a_1, \ldots, a_k$ of the alphabet $\Sigma$.
For $w \in \Sigma^*$ and $1 \leq i \leq k$ let $|w|_{a_i}$ be the number of occurrences 
of $a_i$ in $w$. The {\em Parikh image} of $w$ is the tuple
$P(w) = (|w|_{a_1}, \ldots, |w|_{a_k}) \in \mathbb{N}^k$.
For a language $L \subseteq \Sigma^*$ its Parikh image is $P(L) = \{ P(w) \mid w \in L \}$.
The following important result was shown by Parikh \cite{Parikh66}.

\begin{thm} \label{thm-Parikh-regular}
The semilinear sets are exactly the Parikh images of the regular languages. From a given 
finite automaton $\mathcal{A}$ one can compute a semilinear representation of $P(L(\mathcal{A}))$.
\end{thm}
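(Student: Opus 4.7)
The plan is to prove both inclusions separately and address effectiveness in the harder direction. For the inclusion that every semilinear set is the Parikh image of a regular language, I fix the enumeration $a_1, \ldots, a_k$ of $\Sigma$ and associate to each $v = (v_1, \ldots, v_k) \in \N^k$ the canonical word $w_v = a_1^{v_1} \cdots a_k^{v_k}$. Given a linear set $L(b, P)$ with $P$ having columns $p_1, \ldots, p_\ell$, the regular language $w_b \cdot w_{p_1}^* \cdots w_{p_\ell}^*$ has Parikh image exactly $L(b, P)$; for a finite union of linear sets, take the union of the corresponding regular languages.

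For the converse together with effectiveness, my plan is to first convert $\mathcal{A}$ into an equivalent regular expression $r$ by a standard state elimination procedure, and then to compute $P(L(r))$ by structural induction on $r$. The base cases $\emptyset$, $\varepsilon$, and $a \in \Sigma$ are immediate, producing $\emptyset$, $\{\mathbf{0}\}$, and the corresponding unit vector respectively. For the inductive cases I use the identities $P(L(r_1 + r_2)) = P(L(r_1)) \cup P(L(r_2))$, $P(L(r_1 \cdot r_2)) = P(L(r_1)) + P(L(r_2))$, and $P(L(r_1^*)) = \bigcup_{n \geq 0} n \cdot P(L(r_1))$, where $+$ denotes Minkowski sum on subsets of $\N^k$. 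The first two operations preserve effective semilinearity by the closure properties recalled in Section~\ref{sec-semi-lin} (union is Boolean, Minkowski sum is immediate from Presburger definability).

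The main obstacle is the Kleene star case, which requires showing that the submonoid $S^\oplus = \bigcup_{n \geq 0} n \cdot S$ of $(\N^k, +)$ generated by an effectively semilinear set $S = \bigcup_{i=1}^m L(b_i, P_i)$ is again effectively semilinear; since this closure property is essentially the content of the theorem itself, I would give a direct computation rather than invoke general results. Stratifying a sum $s_1 + \cdots + s_n$ (with $s_j \in S$) by the subset $I \subseteq [m]$ of components from which at least one summand is drawn, and distributing the remaining freedom among the chosen components, one verifies
\begin{equation*}
S^\oplus \;=\; \{\mathbf{0}\} \;\cup\; \bigcup_{\emptyset \neq I \subseteq [m]} L\!\Bigl(\sum_{i \in I} b_i,\; \bigl[P_i \mid b_i\bigr]_{i \in I}\Bigr),
\end{equation*}
where for each $i \in I$ one adjoins the columns of $P_i$ together with one extra column $b_i$ to the period matrix: the offset $\sum_{i \in I} b_i$ records the mandatory first draw from each chosen component, while the extra $b_i$ columns permit arbitrarily many further draws. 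With this identity in hand, the induction on $r$ produces an effective semilinear representation of $P(L(r))$, and hence of $P(L(\mathcal{A}))$, completing the proof.
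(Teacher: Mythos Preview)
Your argument is correct, but there is nothing to compare it to: the paper does not prove this statement. Theorem~\ref{thm-Parikh-regular} is stated as a classical result due to Parikh and is simply cited with the reference \cite{Parikh66}; no proof is given or sketched in the paper.

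That said, your proof is sound and is one of the standard modern routes to Parikh's theorem for regular languages (structural induction on a regular expression, with the Kleene-star case handled by the explicit formula you wrote down). The identity
\[
S^\oplus \;=\; \{\mathbf{0}\} \;\cup\; \bigcup_{\emptyset \neq I \subseteq [m]} L\Bigl(\textstyle\sum_{i \in I} b_i,\; \bigl[P_i \mid b_i\bigr]_{i \in I}\Bigr)
\]
is correct in both directions: an element on the right is realised on the left by taking, for each $i\in I$, one summand $b_i + P_i c'_i$ together with $m_i$ further summands equal to $b_i$; conversely any finite sum from $S$ stratifies by the set $I$ of components used. One small remark: you justify closure under Minkowski sum via Presburger definability, but in fact you want an explicit semilinear representation, and for that the direct identity $L(b,P)+L(b',P')=L(b+b',[P\mid P'])$ (extended by distributing over unions) is more to the point than appealing to the general Presburger machinery.
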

We will also use the following simple lemma:
\begin{lem}[\mbox{c.f.~\cite[Lemma 5.8]{FiLoZe2021}}] \label{2dim-monoid}
Let $p,q,r,s,u,v \in \Sigma^*$.
 Then the set 
$$\{ (x,y) \in \mathbb{N} \times \mathbb{N} \mid pq^xr = su^yv \}$$
is semilinear and a semilinear representation can be computed from $p,q,r,s,u,v$.
\end{lem}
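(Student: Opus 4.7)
The plan is to express the condition $pq^x r = s u^y v$ as a formula in Presburger arithmetic over the variables $x, y$, and then appeal to the effective equivalence between Presburger-definable sets and semilinear sets recalled in Section~\ref{sec-semi-lin}.

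First I would dispose of the degenerate cases $q = \varepsilon$ or $u = \varepsilon$ separately. If both are empty, the equation reduces to $pr = sv$ and the solution set is either $\mathbb{N}^2$ or $\emptyset$. If exactly one is empty, say $q = \varepsilon \neq u$, then $|su^y v|$ is strictly monotone in $y$, so the length equation $|p|+|r|=|s|+y|u|+|v|$ has at most one solution $y_0$, and the overall solution set is either $\mathbb{N} \times \{y_0\}$ or $\emptyset$. All of these are trivially (effectively) semilinear.

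For the main case $|q|, |u| \geq 1$, I would build a Presburger formula $\Phi(x,y)$ equivalent to $pq^x r = s u^y v$. Two words coincide iff they have equal length and agree letter by letter, so $\Phi(x,y)$ is the conjunction of the linear length equation
$$|p| + x\,|q| + |r| = |s| + y\,|u| + |v|$$
with a universally quantified positional equality. The key observation is that for every letter $c \in \Sigma$, the predicate ``the $i$-th letter of $pq^x r$ equals $c$'' is expressible in Presburger arithmetic as a disjunction over three ranges for $i$: either $i$ lies in the fixed prefix $p$ (a finite disjunction $\bigvee_{j : p_j = c} i = j$), or in the block $q^x$ (captured by $|p| < i \leq |p|+x|q|$ together with $\bigvee_{j : q_j = c} \exists k \, (0 \leq k < x \wedge i = |p|+k|q|+j)$), or in the fixed suffix $r$ (another finite disjunction of linear equations in $i$ and $x$). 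An analogous formula $\Psi_c(y,i)$ handles the right-hand side. Taking the biconditional over all $c \in \Sigma$ under a bounded universal quantifier $\forall i \,(1 \leq i \leq |p|+x|q|+|r| \rightarrow \cdots)$ then yields $\Phi(x,y)$.

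Since $\Phi(x,y)$ is Presburger, its solution set in $\mathbb{N}^2$ is effectively semilinear. The main subtlety I anticipate is the bookkeeping of the positional decomposition: one must correctly encode the modular relationship between $i$ and the period $|q|$ inside the block $q^x$ as a bounded existential condition on a quotient $k$, and do the same on the right-hand side. Once the formula is written down cleanly, a semilinear representation can be extracted from it via the standard Presburger-to-semilinear conversion cited in Section~\ref{sec-semi-lin}.
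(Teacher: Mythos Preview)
Your proof is correct. The paper does not actually give a proof of this lemma; it is stated with a citation to \cite[Lemma~5.8]{FiLoZe2021} and used as a black box. So there is no in-paper argument to compare against.

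Your Presburger encoding is sound: the only multiplications that appear are of a variable by a fixed constant ($x\cdot|q|$, $k\cdot|q|$, $y\cdot|u|$), which is expressible in $(\mathbb{N},+)$, and both the bounded universal quantifier over positions and the bounded existential over the block index $k$ are legitimate Presburger devices. The biconditional over $c\in\Sigma$ is also fine since exactly one letter occupies each position. One small simplification you could make: the congruence ``$(i-|p|)\equiv j\pmod{|q|}$'' is itself a Presburger atom, so you do not strictly need the auxiliary quotient variable $k$; but introducing $k$ does no harm.

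For context, a common alternative route (and likely the one behind the cited lemma) is more combinatorial: use the length equation to reduce to a single residue class, then invoke periodicity of the overlap between $q^x$ and $u^y$ (in the spirit of Fine--Wilf) to show that once $x$ and $y$ are large enough the solution set is either empty or a full arithmetic progression, leaving only finitely many small cases to check by hand. Your Presburger approach trades this case analysis for a uniform logical encoding, which is arguably cleaner and certainly adequate here.
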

A {\em finite state transducer} $\mathcal{T}$ over the alphabet $\Sigma$ is a tuple $\mathcal{T} = (Q, I, \delta, F)$
where $I$ and $F$ have the same meaning as for a finite automaton and 
$$
\delta \subseteq (Q \times \Sigma \times \{\varepsilon\} \times Q) \cup (Q \times \{\varepsilon\} \times \Sigma \times Q).$$
A pair $(u,v) \in \Sigma^* \times \Sigma^*$ is accepted by  $\mathcal{T}$ if there are transitions $(q_{i-1}, a_i, b_i, q_i) \in \delta$
for $1 \leq i \leq |u|+|v|$ (where $a_i, b_i \in \Sigma \cup \{\varepsilon\}$) such that $u = a_1 \cdots a_{|u|+|v|}$,
$v = b_1 \cdots b_{|u|+|v|}$, $q_0 \in I$ and $q_{|u|+|v|} \in F$. With $R(\mathcal{T})$ we denote the set of all pairs accepted by $\mathcal{T}$.
A relation $R \subseteq  \Sigma^* \times \Sigma^*$ is a {\em rational relation} if it is accepted by a 
finite state transducer.

\subsection{Groups} \label{sec-groups}

For more details on group theory we refer the reader to \cite{LySch77}.
Infinite groups are usually given by presentations. Take an arbitrary non-empty set  $\Omega$ and let $\Omega^{-1} = \{a^{-1} \mid a \in \Omega\}$
be a set of formal inverses such that $\Omega \cap \Omega^{-1} = \emptyset$. Let $\Sigma = \Omega \cup \Omega^{-1}$.
The bijection $a \mapsto a^{-1}$ from $\Omega$ to $\Omega^{-1}$ can be extended to a natural 
involution $w \mapsto w^{-1}$ on $\Sigma^*$. For this we set  
$(a^{-1})^{-1} = a$ for $a \in \Omega$ and $(a_1 \cdots a_n)^{-1} = a_n^{-1} \cdots a_1^{-1}$ for $a_1, \ldots, a_n \in \Sigma$.
A word $w \in \Sigma^*$ is called {\em reduced} if it does not contain
an occurrence of a word $aa^{-1}$ or $a^{-1}a$ ($a \in \Sigma$). Applying the cancellation rules $aa^{-1} \to \varepsilon$ or $a^{-1}a \to \varepsilon$
as long as possible, every word $w \in \Sigma^*$  can be mapped to a unique reduced word $\mathsf{red}(w)$.
The {\em free group} $F(\Omega)$ consists of all reduced words together with the group multiplication $u \cdot v = \mathsf{red}(uv)$
for reduced words $u$ and $v$. The mapping  $\mathsf{red}$ can be also viewed as a monoid morphism from 
$\Sigma^*$ to $F(\Omega)$. For a subset $R \subseteq \Sigma^*$ one defines
the group $\langle \Sigma \mid R \rangle$ as the quotient group $F(\Omega)/N_R$, where $N_R$ is the smallest normal subgroup of $F(\Omega)$
that contains $\mathsf{red}(R) \subseteq F(\Omega)$. In other words, $N_R$ is the intersection of all normal 
subgroups of $F(\Omega)$ that contain $\mathsf{red}(R)$. Clearly, every group is (isomorphic to a group) of the form 
$\langle \Sigma \mid R \rangle$.

Let $G = \langle \Sigma \mid R \rangle$ in the following. If $\Sigma$ is finite then $G$ is called
{\em finitely generated} (f.g. for short) and $\Sigma$ is called a {\em finite symmetric generating set} for $G$.
If both $\Sigma$ and $R$ are finite, then $G$ is called {\em finitely presented}. 
The surjective monoid morphism $\mathsf{red} \colon \Sigma^* \to F(\Omega)$
extends to a surjective monoid morphism $h  \colon \Sigma^* \to G$, called the {\em evaluation morphism}.
For two words $u,v \in \Sigma^*$ we write $u =_G v$ if $h(u)=h(v)$.
For a subset $S \subseteq G$ and $u \in \Sigma^*$
we write $u \in_G S$ if $h(u) \in S$.

\section{Knapsack and exponent equations}

Let $G$ be a finitely generated group with the finite symmetric generating set $\Sigma$.
Moreover, let $X$ be a set of formal variables that take values
from $\N$. For a subset $U\subseteq X$, we call a mapping $\sigma : U \to \N$ a \emph{valuation} for $U$.
An \emph{exponent expression} over $\Sigma$ is a formal expression of the form 
$e = v_0 u_1^{x_1} v_1 u_2^{x_2} v_2  \cdots u_k^{x_k} v_k$
with $k \geq 1$, words $u_i, v_i \in \Sigma^*$ and exponent variables $x_1, \ldots, x_k \in X$. 
Here, we allow $x_i = x_j$ for $i \neq j$. We also write $e(x_1,\ldots,x_k)$ in order to make the exponent variables explicit.
We can assume that $u_i \neq \varepsilon$ for all $1 \le i \le k$.
If every exponent variable occurs at most once in $e$, then $e$ is called a \emph{knapsack expression}. 
Let $X_e= \{ x_1, \ldots, x_k \}$ be the set of exponent variables that occur in $e$.
 For a valuation $\sigma : U \to \N$ such that $X_e \subseteq U$ (in which case we also say
 that $\sigma$ is a valuation for $e$), we define 
 $\sigma(e) = v_0 u_1^{\sigma(x_1)} v_1 u_2^{\sigma(x_2)} v_2  \cdots u_k^{\sigma(x_k)} v_k \in \Sigma^*$.
We say that $\sigma$ is a \emph{$G$-solution} of the equation $e=1$ if $\sigma(e) =_G 1$ holds.
With $\Sol_G(e)$ we denote the set of all $G$-solutions $\sigma : X_e \to \N$ of $e$. We can view $\Sol_G(e)$ as a subset
of $\N^m$ for $m=|X_e| \le k$.
We define {\em solvability of systems of exponent equations over $G$} as the following decision problem:
\begin{description}
\item[Input] A finite list of exponent expressions $e_1,\ldots,e_n$ over $\Sigma$.
\item[Question] Is $\bigcap_{1 \le i \le n} \Sol_G(e_i)$ non-empty?
\end{description}
The {\em knapsack problem for $G$} is the following 
decision problem:
\begin{description}
\item[Input] A single knapsack expression $e$ over $\Sigma$.
\item[Question] Is $\Sol_G(e)$ non-empty?
\end{description}
It is easy to observe that the concrete choice of the generating set $\Sigma$ has no influence
on the decidability and complexity status of these problems.

For the above decision problems one could restrict to exponent expressions of the form 
$e = u_1^{x_1} u_2^{x_2} \cdots u_k^{x_k} v$: for $e = v_0 u_1^{x_1} v_1 u_2^{x_2} v_2   \cdots u_k^{x_k} v_k$
and $$e' = (v_0 u_1 v_0^{-1})^{x_1} (v_0 v_1 u_2 v_1^{-1} v_0^{-1})^{x_2} \cdots (v_0 \cdots v_{k-1} u_k v_{k-1}^{-1} \cdots v_0^{-1})^{x_k}
v_0 \cdots v_k$$
we have $\sigma(e) =_G  \sigma(e')$ for every valuation $\sigma$.

The group $G$ is called {\em knapsack semilinear} if for every knapsack expression $\E$ over $\Sigma$,
the set $\Sol_G(\E)$ is a semilinear set of vectors and a semilinear representation can be effectively computed from $\E$.
This implies that for every exponent expression $\E$ over $\Sigma$,
the set $\Sol_G(\E)$ is semilinear as well and a semilinear representation can be effectively computed from $\E$.
This fact can be easily deduced from the known effective closure properties of semilinear set, see e.g. \cite{FiLoZe2021}.

Since the semilinear sets are effectively closed under intersection, solvability of systems of exponent equations is decidable for every knapsack semilinear group.
As mentioned in the introduction, the class of knapsack semilinear groups is very rich.
Examples of a groups, where knapsack is decidable but solvability of systems of exponent equations
is undecidable are the Heisenberg group $\DHB$ 
(the group of all upper triangular $(3 \times 3)$-matrices over the integers, where all diagonal entries
are $1$) \cite{KoenigLohreyZetzsche2015a} and the Baumslag-Solitar group $\mathsf{BS}(1,2)$ \cite[Theorem~E.1]{BeGaZe-arxiv,LohreyZ20}.
These groups are not knapsack semilinear in a strong sense:
there are knapsack expressions $\E$ such that $\Sol_{\DHB}(\E)$ (resp. $\Sol_{\mathsf{BS}(1,2)}(\E)$) is not semilinear. 

Let $S \subseteq G$. We say that $G$ is \emph{knapsack semilinear relative to $S$} if 
for every knapsack expression $\E$ over $\Sigma$,
the set $\{ \sigma : X_\E \to \N \mid \sigma(\E) \in_G S \}$  is a semilinear set of vectors and a semilinear representation can be effectively computed from $\E$.
We are mainly interested in the case where $S$ is a subgroup of $G$.
For sets $S_1, \ldots, S_k \subseteq G$ we say that $G$ is knapsack semilinear relative to $\{S_1, \ldots, S_k\}$ if for every
$1 \leq i \leq k$, $G$ is knapsack semilinear relative to $S_i$.  Note that $G$ is knapsack semilinear if and only if it is 
knapsack semilinear relative to $1$.

\section{HNN-extensions} 

In this section we introduce the important operation of HNN-extension.
Suppose $G=\langle \Sigma\mid R\rangle$ is a finitely generated group 
with the  finite symmetric generating set $\Sigma = \Omega \cup \Omega^{-1}$ and $R \subseteq \Sigma^*$.
Fix two isomorphic subgroups $A$ and $B$ of $G$ together with an isomorphism $\varphi\colon A\to
B$. Let $t \notin \Sigma$ be a new letter. Then the corresponding \emph{HNN-extension} is the group 
$$H=\langle \Sigma\cup \{t,t^{-1}\} \mid R\cup \{t^{-1}a^{-1}t\varphi(a) \mid a\in A\}\rangle$$ (formally, we identify here every element $g \in A \cup B$
with a word over $\Sigma$ that evaluates to $g$). This group is usually denoted by 
\begin{equation} \label{eq-HNN}
H=\langle G, t \mid t^{-1}at=\varphi(a)~(a\in A)\rangle .
\end{equation}
Intuitively, $H$ is obtained from $G$ by adding a new element $t$ such that
conjugating elements of $A$ with $t$ applies the isomorphism $\varphi$.  Here,
$t$ is called the \emph{stable letter} and the groups $A$ and $B$ are the
\emph{associated subgroups}.   A basic fact about HNN-extensions is that
the group $G$ embeds naturally into $H$~\cite{HiNeNe49}.

In this paper, we consider HNN-extensions 
$H=\langle G, t \mid t^{-1}at=\varphi(a)~(a\in A)\rangle$, 
where $A \leq G$ is a subgroup of $G = \langle \Sigma \mid R \rangle$ 
and $\varphi : A \to A$ is the identity mapping. Thus, $H$ can be written as
\begin{equation} \label{eq-HNN-C}
H=\langle G, t \mid t^{-1}at=a~(a\in A)\rangle .
\end{equation}
Let us fix this HNN-extension for the further consideration.
Let us denote with 
\[ h : (\Sigma \cup\{t,t^{-1}\})^*\to H \] 
the evaluation morphism. 

A word $u\in (\Sigma \cup \{t,t^{-1}\})^*$ is called \emph{Britton-reduced} if
it does not contain a factor of the form $t^{-\alpha} w t^\alpha$ with $\alpha\in\{-1,1\}$,
$w \in \Sigma^*$ and $w \in_G A$. A factor of the form $t^{-\alpha} w t^\alpha$ with $\alpha\in\{-1,1\}$,
$w \in \Sigma^*$ and $w \in_G A$ is also called a {\em pin}, which we can replace by $w$. 
Since this decreases the number of $t$'s in the word, we can reduce every word to
an equivalent Britton-reduced word. We denote the set of all Britton-reduced words in the HNN-extension \eqref{eq-HNN-C} by $\BR(H)$.
For $u\in (\Sigma \cup \{t,t^{-1}\})^*$ we define $\pi_t(u)$ as the projection of the word $u$ onto the alphabet $\{t, t^{-1}\}$
and $\pi_\Sigma(u)$ as the projection of the word $u$ onto the alphabet $\Sigma$.
Britton's lemma states that if $u =_H 1$ ($u\in (\Sigma \cup \{t,t^{-1}\})^*$) then $u$ contains a pin or $u \in \Sigma^*$ and $u =_G 1$.
Note that a consequence of this is that if $u \in_H G$
then $u$ contains a pin or $u \in \Sigma^*$. To see this, note that $u \in_H G$ implies that 
$u v =_H 1$ for a word $v \in \Sigma^*$. Britton's lemma implies that $uv$ must contain a pin (i.e., $u$ must contain a pin) or
$u v \in \Sigma^*$ (i.e., $u \in \Sigma^*$). In particular, a Britton-reduced
word that contains $t^{\pm 1}$ cannot represent an element of the base group $G$.

Note that $H/N(t) \cong G$, where $N(t)$ is the smallest normal subgroup of $H$ containing $t$.
By $\pi_G: H \to G$ we denote the canonical projection. We have
$\pi_G(g_0 t^{\delta_1}g_1 \cdots t^{\delta_k}g_k) = g_0 g_1 \cdots g_k$ for $g_0, \ldots, g_k \in G$.
Hence, on the level of words, $\pi_G$ is computed by the projection $\pi_\Sigma : (\Sigma \cup \{t,t^{-1}\})^* \to \Sigma^*$.

\begin{lem} \label{lemma-id-in-H}
Let $w \in (\Sigma \cup \{t,t^{-1}\})^*$. Then $w =_H 1$ if and only if $w \in_H G$ and $\pi_\Sigma(w) =_G 1$.
\end{lem}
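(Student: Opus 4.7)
The plan is to exploit the canonical projection $\pi_G \colon H \to G$ introduced just before the lemma, together with the natural embedding $G \hookrightarrow H$. The embedding is injective by the standard fact about HNN-extensions recalled after \eqref{eq-HNN}, and $\pi_G$ is the quotient map $H \twoheadrightarrow H/N(t) \cong G$; since $N(t)$ is the normal closure of $t$ alone, $\pi_G$ restricts to the identity on (the image of) $G$ in $H$. In other words, $\pi_G$ is a retraction of $H$ onto $G$, and on the level of words it coincides with the letter projection $\pi_\Sigma$ followed by evaluation in $G$.

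The forward direction is immediate. If $w =_H 1$ then $h(w) = 1 \in G$, so $w \in_H G$; and applying the homomorphism $\pi_G$ to $h(w) = 1$ together with the fact that $\pi_G$ is computed on words by $\pi_\Sigma$ yields $\pi_\Sigma(w) =_G 1$.

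For the converse, I would assume $w \in_H G$ and $\pi_\Sigma(w) =_G 1$, and write $h(w) = g$ with $g \in G$ viewed inside $H$ via the embedding. Then on the one hand $\pi_G(h(w)) = g$ because $\pi_G$ is the identity on $G$; on the other hand $\pi_G(h(w))$ equals the evaluation of $\pi_\Sigma(w)$ in $G$, which is $1$ by assumption. Hence $g =_G 1$, and injectivity of $G \hookrightarrow H$ then forces $h(w) = 1$ in $H$, i.e.\ $w =_H 1$. I do not foresee any real obstacle: the lemma is a direct and purely formal consequence of the existence of the retraction $\pi_G$, so the whole argument should fit into a couple of lines once these facts are recorded.
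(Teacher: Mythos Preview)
Your proof is correct but takes a different route from the paper's. The paper argues directly via Britton reduction: because in this particular HNN-extension each reduction step $t^{-\alpha} a t^{\alpha} \to a$ (for $a \in_G A$) merely deletes two stable letters and leaves the $\Sigma$-content intact, any word $w$ that represents an element of $G$ Britton-reduces precisely to $\pi_\Sigma(w)$, and both implications follow immediately from that. You instead use the algebraic fact that $\pi_G \colon H \to G$ is a retraction (which holds exactly because $\varphi = \mathrm{id}$, so that killing $t$ recovers $G$) and that it is computed on words by $\pi_\Sigma$. Your argument is a bit more conceptual and bypasses Britton's lemma entirely; the paper's argument is more combinatorial and makes explicit the mechanism---Britton reduction commuting with $\pi_\Sigma$---that is reused in later proofs such as Lemma~\ref{2dim}.
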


\begin{proof}
If $w =_H 1$, i.e., $h(w) = 1$, then clearly $w \in_H G$. Moreover, by Britton's lemma, $w$ can be reduced to a word from $\Sigma^*$ using Britton reduction.
But this word must be $\pi_\Sigma(w)$. Hence, we have $\pi_\Sigma(w) =_G 1$. On the other hand, if $w \in_H G$ and $\pi_\Sigma(w) =_G 1$,
then, again, $w$ can be reduced to $\pi_\Sigma(w) =_G 1$ using Britton reduction, which implies $w =_H 1$.
\end{proof}
We will also need the following lemma (cf. Lemma 2.3 of \cite{HauLo11}):
\begin{lem}\label{red uv}
Assume that $u=u_0 t^{\delta_1}u_1 \cdots t^{\delta_k}u_k$ and
$v=v_0 t^{\varepsilon_1}v_1 \cdots t^{\varepsilon_\ell} v_\ell$ are Britton-reduced words with $u_i, v_j \in \Sigma^*$. Let $0 \le m \leq \max\{k,\ell\}$ be the largest number such that
\begin{itemize}
\item $\delta_{k-i} = -\varepsilon_{i+1}$ for all $0 \leq i \leq m-1$ and
\item $u_{k-i+1} \cdots u_k v_0 \cdots v_{i-1} \in_G A$
for all $0 \leq i \leq m$ (for $i=0$ this condition is trivially satisfied).
\end{itemize}
Then
$w := u_0 t^{\delta_1}u_1 \cdots t^{\delta_{k-m}} (u_{k-m} \cdots u_k v_0 \cdots v_{m}) t^{\varepsilon_{m+1}}v_{m+1} \cdots t^{\varepsilon_\ell}v_\ell$
is a Britton-reduced word with $w =_H uv$.
\end{lem}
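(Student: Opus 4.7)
The plan is to establish the two assertions of the lemma separately: that $w =_H uv$, and that $w$ is Britton-reduced. Both rely on the fact that in our HNN-extension $\varphi$ is the identity, so a pin $t^{-\alpha} a t^\alpha$ with $a \in_G A$ collapses to just $a$ without any application of $\varphi$ to the middle $\Sigma^*$-factor.

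For the equality $w =_H uv$, I would iteratively perform $m$ pin reductions starting from $uv$. After $i \in \{0,1,\ldots,m\}$ steps, the intermediate word has the shape
\[ u_0 t^{\delta_1} u_1 \cdots t^{\delta_{k-i}} (u_{k-i} \cdots u_k v_0 \cdots v_i) t^{\varepsilon_{i+1}} v_{i+1} \cdots t^{\varepsilon_\ell} v_\ell, \]
which for $i = 0$ is $uv$ and for $i = m$ is $w$. Passing from step $i-1$ to step $i$ amounts to collapsing the middle block $t^{\delta_{k-i+1}} (u_{k-i+1} \cdots u_k v_0 \cdots v_{i-1}) t^{\varepsilon_i}$; this block really is a pin, since the stated hypothesis at index $i-1$ in the first list gives $\delta_{k-i+1} = -\varepsilon_i$, and the hypothesis at index $i$ in the second list gives $u_{k-i+1} \cdots u_k v_0 \cdots v_{i-1} \in_G A$. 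Because $\varphi$ is the identity, each collapse erases the two $t$-symbols without disturbing the surrounding $\Sigma^*$-part.

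For Britton-reducedness of $w$, I would show that no adjacent pair of $t$-symbols in $w$ forms a pin. The surviving $t$-symbols are $t^{\delta_1}, \ldots, t^{\delta_{k-m}}$ from $u$ and $t^{\varepsilon_{m+1}}, \ldots, t^{\varepsilon_\ell}$ from $v$; adjacent pairs both from $u$ (or both from $v$) are separated by the same $\Sigma^*$-factor as in the original Britton-reduced word $u$ (resp.\ $v$), so no pin can occur there. The only novel candidate, present when $m < k$ and $m < \ell$, is the middle pair $t^{\delta_{k-m}}, t^{\varepsilon_{m+1}}$ separated by $u_{k-m} \cdots u_k v_0 \cdots v_m$; if this were a pin, then $\delta_{k-m} = -\varepsilon_{m+1}$ together with $u_{k-m} \cdots u_k v_0 \cdots v_m \in_G A$ would hold, which are exactly the additional hypotheses needed to admit $m+1$ in place of $m$ in the lemma statement, contradicting the maximality of $m$. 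The boundary cases $m = k$ or $m = \ell$ simply remove the middle-pair check on one side and require no separate argument. The main obstacle, modest as it is, is aligning the two index lists with the $m$ reduction steps and observing that the failure of extendability to $m+1$ is precisely what rules out the only new pin.
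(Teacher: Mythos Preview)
Your argument is correct: the iterative pin-collapse establishes $w =_H uv$, the index bookkeeping matches (the first bullet at index $i-1$ and the second at index $i$ are exactly what is needed for the $i$-th reduction), and the maximality of $m$ is precisely what forbids the only new pin candidate in $w$. The paper does not give its own proof of this lemma; it merely cites Lemma~2.3 of \cite{HauLo11}, so there is nothing further to compare.
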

The above lemma is visualized in Figure~\ref{fig-peak-elimination}, where $$a = u_{k-m+1} \cdots u_k v_0 \cdots v_{m-1} \in_H A.$$

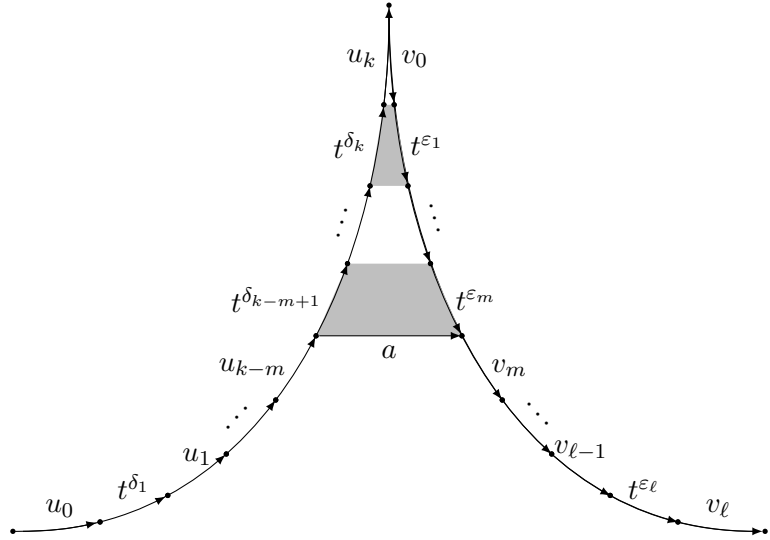
\begin{figure}[t]
\begin{center}
\begin{tikzpicture}[x=0.5cm,y=0.5cm]
 \node (A) at (-10,0) {};
 \node (J) at (0,14) {};
 \node (S) at (10,0) {};
 
  \tikzset{>=latex}
\path[->,shorten >= -3pt, shorten <= -3pt, nodes={circle,fill=black,inner sep=0pt,minimum size=2pt}] (A) to[out=0,in=270]
  node[pos=0.11] (B) {}
  node[pos=0.22] (C) {}
  node[pos=0.33] (D) {}
  node[pos=0.44] (E) {}
  node[pos=0.55] (F) {}
  node[pos=0.66] (G) {}
  node[pos=0.77] (H) {}
  node[pos=0.88] (I) {}
  (J);
  
\draw[->,shorten >= -3pt, shorten <= -3pt, nodes={circle,fill=black,inner sep=0pt,minimum size=2pt}] (J) to[out=270,in=180]
  node[pos=0.12] (K) {}
  node[pos=0.23] (L) {}
  node[pos=0.34] (M) {}
  node[pos=0.45] (N) {}
  node[pos=0.56] (O) {}
  node[pos=0.67] (P) {}
  node[pos=0.78] (Q) {}
  node[pos=0.89] (R) {}
  (S);
  
 \path[fill=gray!50,opacity=1.0]
 (F.center) to (G.center) to (M.center) to (N.center)
 (H.center) to (I.center) to (K.center) to (L.center);
 
\draw[->,shorten <= -3pt] (A) to[bend right=6] (B);

\begin{scope}[decoration={
    markings,
    mark=at position 0.211 with {\arrow{>}}}
    ] 
    \path[postaction={decorate},shorten >= -3pt, shorten <= -3pt] (A) to[out=0,in=270] (J);
\end{scope}

\begin{scope}[decoration={
    markings,
    mark=at position 0.315 with {\arrow{>}}}
    ] 
    \path[postaction={decorate},shorten >= -3pt, shorten <= -3pt] (A) to[out=0,in=270] (J);
\end{scope}

\begin{scope}[decoration={
    markings,
    mark=at position 0.42 with {\arrow{>}}}
    ] 
    \path[postaction={decorate},shorten >= -3pt, shorten <= -3pt] (A) to[out=0,in=270] (J);
\end{scope}

\begin{scope}[decoration={
    markings,
    mark=at position 0.527 with {\arrow{>}}}
    ] 
    \path[postaction={decorate},shorten >= -3pt, shorten <= -3pt] (A) to[out=0,in=270] (J);
\end{scope}

\begin{scope}[decoration={
    markings,
    mark=at position 0.64 with {\arrow{>}}}
    ] 
    \path[postaction={decorate},shorten >= -3pt, shorten <= -3pt] (A) to[out=0,in=270] (J);
\end{scope}

\begin{scope}[decoration={
    markings,
    mark=at position 0.755 with {\arrow{>}}}
    ] 
    \path[postaction={decorate},shorten >= -3pt, shorten <= -3pt] (A) to[out=0,in=270] (J);
\end{scope}

\begin{scope}[decoration={
    markings,
    mark=at position 0.87 with {\arrow{>}}}
    ] 
    \draw[postaction={decorate}] (A) to[out=0,in=270] (J);
\end{scope}

\draw[->,shorten >= -3pt, shorten <= -3pt] (I) to[bend right=3.5] (J);

\begin{scope}[decoration={
    markings,
    mark=at position 0.125 with {\arrow{>}}}
    ] 
    \path[postaction={decorate},shorten >= -3pt, shorten <= -3pt] (J) to[out=270,in=180] (S);
\end{scope}

\begin{scope}[decoration={
    markings,
    mark=at position 0.24 with {\arrow{>}}}
    ] 
    \path[postaction={decorate},shorten >= -3pt, shorten <= -3pt] (J) to[out=270,in=180] (S);
\end{scope}

\begin{scope}[decoration={
    markings,
    mark=at position 0.358 with {\arrow{>}}}
    ] 
    \path[postaction={decorate},shorten >= -3pt, shorten <= -3pt] (J) to[out=270,in=180] (S);
\end{scope}

\begin{scope}[decoration={
    markings,
    mark=at position 0.469 with {\arrow{>}}}
    ] 
    \path[postaction={decorate},shorten >= -3pt, shorten <= -3pt] (J) to[out=270,in=180] (S);
\end{scope}

\begin{scope}[decoration={
    markings,
    mark=at position 0.579 with {\arrow{>}}}
    ] 
    \path[postaction={decorate},shorten >= -3pt, shorten <= -3pt] (J) to[out=270,in=180] (S);
\end{scope}

\begin{scope}[decoration={
    markings,
    mark=at position 0.684 with {\arrow{>}}}
    ] 
    \path[postaction={decorate},shorten >= -3pt, shorten <= -3pt] (J) to[out=270,in=180] (S);
\end{scope}

\begin{scope}[decoration={
    markings,
    mark=at position 0.787 with {\arrow{>}}}
    ] 
    \path[postaction={decorate},shorten >= -3pt, shorten <= -3pt] (J) to[out=270,in=180] (S);
\end{scope}

\begin{scope}[decoration={
    markings,
    mark=at position 0.89 with {\arrow{>}}}
    ] 
    \draw[postaction={decorate}] (J) to[out=270,in=180] (S);
\end{scope}

\tikzset{>=latex}
 \path[line width=0pt] (A) -- (B) node[midway,above] {$u_0$};
 \path[line width=0pt] (B) -- (C) node[midway,above] {$t^{\delta_1}$};
 \path[line width=0pt] (C) -- (D) node[midway,above] {$u_1$};
 \path[line width=0pt] (D) -- (E) node[midway,sloped,above] {$\cdots$};
 \path[line width=0pt] (E) -- (F) node[midway,left] {$u_{k-m}$};
 \path[line width=0pt] (F) -- (G) node[midway,left] {$t^{\delta_{k-m+1}}$};
 \path[line width=0pt] (G) -- (H) node[midway,sloped,above] {$\cdots$};
 \path[line width=0pt] (H) -- (I) node[midway,left] {$t^{\delta_k}$};
 \path[line width=0pt] (I) -- (J) node[midway,left] {$u_k$};
 
 \path[line width=0pt] (J) -- (K) node[midway,right] {$v_0$};
 \path[line width=0pt] (K) -- (L) node[midway,right] {$t^{\varepsilon_1}$};
 \path[line width=0pt] (L) -- (M) node[midway,sloped,above] {$\cdots$};
 \path[line width=0pt] (M) -- (N) node[midway,right] {$t^{\varepsilon_m}$};
 \path[line width=0pt] (N) -- (O) node[midway,right] {$v_m$};
 \path[line width=0pt] (O) -- (P) node[midway,sloped,above] {$\cdots$};
 \path[line width=0pt] (P) -- (Q) node[midway,above=2pt] {$v_{\ell-1}$};
 \path[line width=0pt] (Q) -- (R) node[midway,above] {$t^{\varepsilon_\ell}$};
 \path[line width=0pt] (R) -- (S) node[midway,above] {$v_\ell$};
 
 \draw[->, shorten >= -1pt] (F) -- (N) node[midway,below] {$a$};
 
 \fill (A) circle (1pt);
 \fill (B) circle (1pt);
 \fill (C) circle (1pt);
 \fill (D) circle (1pt);
 \fill (E) circle (1pt);
 \fill (F) circle (1pt);
 \fill (G) circle (1pt);
 \fill (H) circle (1pt);
 \fill (I) circle (1pt);
 
 \fill (J) circle (1pt);
 
 \fill (K) circle (1pt);
 \fill (L) circle (1pt);
 \fill (M) circle (1pt);
 \fill (N) circle (1pt);
 \fill (O) circle (1pt);
 \fill (P) circle (1pt);
 \fill (Q) circle (1pt);
 \fill (R) circle (1pt);
 \fill (S) circle (1pt);
\end{tikzpicture}
\end{center}
\caption{\label{fig-peak-elimination} The situation from Lemma~\ref{red uv}.}
\end{figure}

\section{Knapsack semilinearity of HNN-extensions} 

The goal of this section is show that the HNN-extension 
\eqref{eq-HNN-C} is knapsack semilinear provided $G$ is knapsack semilinear relative to $\{1,A\}$.
We first introduce some concepts that can be found in a similar form in \cite{FiLoZe2021}.

We call a word $w\in \BR(H)$ {\em well-behaved}, if $w^m$ is Britton-reduced for every $m \geq 0$.
Note that $w$ is well-behaved if and only if $w$ and $w^2$ are Britton-reduced.
Note that every word $w \in \Sigma^*$ is well-behaved.

\begin{lem}[\mbox{c.f.~\cite[Lemma 6.3]{FiLoZe2021}}]\label{Britton-reduced-powers}
From a given word $u\in \BR(H)$ we can compute words $s,p,v \in \BR(H)$ such that $u^m =_H s v^m p$ for every $m\geq 0$ 
and $v$ is well-behaved.
\end{lem}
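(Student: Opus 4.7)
The plan is to proceed by induction on $k$, the number of occurrences of $t^{\pm 1}$ in $u$, repeatedly performing a cyclic conjugation that strips off one pin-forming $t$ from each end until a well-behaved word remains.

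For the base case $k = 0$ we have $u \in \Sigma^*$, so every power stays in $\Sigma^*$ and is trivially Britton-reduced; take $s = p = \varepsilon$ and $v = u$. For $k \ge 1$ write $u = u_0 t^{\delta_1} u_1 \cdots t^{\delta_k} u_k$ and apply Lemma~\ref{red uv} to $u \cdot u$. The maximal overlap parameter $m^{*}$ satisfies $m^{*} \ge 1$ if and only if $\delta_k = -\delta_1$ and $u_k u_0 \in_G A$, both of which are effectively decidable (the second by the membership test for $A$, which is a special case of the oracle for knapsack semilinearity of $G$ relative to $A$). If $m^{*} = 0$, then $u^2$ is Britton-reduced, hence $u$ is already well-behaved by the characterisation recalled just before the lemma; return $s = p = \varepsilon$, $v = u$.

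If $m^{*} \ge 1$, set $s_1 = u_0 t^{\delta_1}$ and define the cyclic conjugate
\[
u' := u_1 t^{\delta_2} u_2 t^{\delta_3} \cdots t^{\delta_{k-1}} (u_{k-1} u_k u_0),
\]
where the trailing factor is a single letter group over $\Sigma$. I would check three properties. (i) $u'$ is Britton-reduced: the internal pin candidates $t^{\delta_i} u_i t^{\delta_{i+1}}$ for $2 \le i \le k-2$ are inherited from $u$ and so fail, and the merged tail letter group is not followed by any $t^{\pm 1}$. (ii) The identity $u =_{H} s_1 u' s_1^{-1}$ holds in $H$: cancelling $u_0^{-1} u_0$ and $t^{-\delta_1} t^{\delta_1}$ inside $s_1^{-1} u s_1$ yields $u_1 t^{\delta_2} \cdots t^{\delta_k} u_k u_0 t^{\delta_1}$, whose tail $t^{\delta_k}(u_k u_0) t^{\delta_1}$ is a pin that collapses to $u_k u_0$ using $\delta_k = -\delta_1$, $u_k u_0 \in_G A$, and the defining relation $t^{-1} a t = a$. (iii) $u'$ has exactly $k-2$ occurrences of $t^{\pm 1}$, so the inductive hypothesis applies.

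Apply induction to $u'$ to obtain $s',p',v' \in \BR(H)$ with $v'$ well-behaved and $(u')^m =_H s'(v')^m p'$; I would strengthen the inductive statement to additionally maintain the syntactic invariants that $s'$ is a prefix of $u'$ as a word and that $p' = (s')^{-1}$ as a word. Set $s := s_1 s'$, $v := v'$, $p := p' s_1^{-1}$. Then $s$ is a prefix of $u$, because $s'$ begins precisely where $s_1$ ends inside $u$ by construction of $u'$; hence $s$ is Britton-reduced as a factor of $u$, and $p = (s_1 s')^{-1} = s^{-1}$ is Britton-reduced as the formal inverse of a Britton-reduced word. The required identity follows from $u^m = (s_1 u' s_1^{-1})^m =_H s_1 (u')^m s_1^{-1} =_H s_1 s'(v')^m p' s_1^{-1} = s v^m p$, completing the induction. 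The main technical point is arranging the strengthened invariants so that Britton-reducedness of $s$ and $p$ is obtained for free; the rest is routine bookkeeping combining Britton reduction with the $t$-commutation identity.
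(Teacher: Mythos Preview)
The paper does not include a proof of this lemma; it simply cites \cite[Lemma~6.3]{FiLoZe2021}. Your approach --- iterated cyclic conjugation, peeling one $t^{\pm1}$ from each end of $u$ until the core is well-behaved --- is the standard one and is essentially what the cited reference does.

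There is, however, a small gap in your strengthened invariant. You want to deduce that $s=s_1s'$ is a prefix of $u$ from the hypothesis that $s'$ is a prefix of $u'$. But $u'$ and the suffix of $u$ following $s_1$ are \emph{different words}: $u'$ ends in the merged $\Sigma$-block $u_{k-1}u_ku_0$, whereas the corresponding part of $u$ continues as $u_{k-1}t^{\delta_k}u_k$. If $s'$ were a prefix of $u'$ reaching into that merged block, $s_1s'$ would not be a prefix of $u$, and your conclusion that $s\in\BR(H)$ would not follow. The fix is easy: add to the invariant that $s'$ is either empty or ends in a letter $t^{\pm1}$. This is preserved by the construction (each $s_1$ ends in $t^{\delta_1}$), and it forces any nonempty $s'$ to stop no later than $t^{\delta_{k-1}}$ inside $u'$, so that $s_1s'$ is genuinely a prefix of $u$. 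With this adjustment the argument is correct.
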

In the following we assume that $G$ is knapsack semilinear relative to $\{1,A\}$.
For a knapsack expression $e = v_0 u_1^{x_1} v_1 u_2^{x_2} v_2  \cdots u_k^{x_k} v_k$ over the alphabet $\Sigma \cup \{t,t^{-1}\}$
we define the knapsack expression 
\[ \pi_\Sigma(e) = \pi_\Sigma(v_0) \pi_\Sigma(u_1)^{x_1} \pi_\Sigma(v_1) \pi_\Sigma(u_2)^{x_2} \pi_\Sigma(v_2)  \cdots \pi_\Sigma(u_k)^{x_k} \pi_\Sigma(v_k)\]
over the alphabet $\Sigma$.

For an exponent expression $e(x_1,\ldots,x_n)$ over the alphabet $\Sigma \cup \{t,t^{-1}\}$ we call 
$$e(x_1,\ldots,x_n) \in_H G$$
a {\em $G$-constraint}. If $e$ is an exponent expression over the alphabet $\Sigma$, then $e \in_G A$ is called an $A$-constraint.
Since $G$ is knapsack semilinear relative to $A$, the set of solutions of an $A$-constraint is  semilinear.

\begin{lem} \label{2dim}
Let $u,v \in \BR(H)$ be well-behaved, $u'$ (resp., $v''$) be a proper prefix of $u$ (resp., $v$)
and $u''$ (resp., $v'$) be a proper suffix of  $u$ (resp., $v$).
Let $e=e(z_1,\ldots, z_k)$ be a knapsack expression over the alphabet $\Sigma$.
Then the set of all $(x,y,z_1,\ldots,z_k) \in \mathbb{N}^{k+2}$ such that the $G$-constraint
\begin{equation} \label{eq-xyz_i}
u'' u^x u' e(z_1,\ldots, z_k) \, v' v^y v''  \in_H G 
\end{equation}
holds is semilinear and a semilinear representation can be effectively computed from the words $u,v,u',u'',v',v'',e$.
\end{lem}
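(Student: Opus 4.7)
The plan is to case-split on whether $u$ and $v$ contain $t$-letters. If $u, v \in \Sigma^*$ the entire expression lies in $\Sigma^*$, so the solution set is all of $\N^{k+2}$. If exactly one (say $v$) lies in $\Sigma^*$ while the other contains a $t$-letter, I argue that $u''u^xu'$ is Britton-reduced as a factor of $u^{x+2}$ (which is Britton-reduced by well-behavedness of $u$), and that appending $ev'v^yv''\in\Sigma^*$ introduces no new pins. So the whole expression is already Britton-reduced, hence it lies in $G$ only if no $t$-letters remain, yielding the linear (hence semilinear) condition $x\cdot|\pi_t(u)| + |\pi_t(u')| + |\pi_t(u'')| = 0$. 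The symmetric case is analogous.

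In the main case both $u$ and $v$ contain $t$-letters. Set $W_1 := u''u^xu'$ and $W_2 := ev'v^yv''$; both are Britton-reduced by the same factor-of-a-power argument (and since prepending $e\in\Sigma^*$ introduces no pins). I apply Lemma~\ref{red uv} to the product $W_1 W_2$: this product lies in $G$ precisely when all $t$-letters cancel, i.e.\ the parameter $m$ of the lemma reaches the common $t$-count $K$ of $W_1$ and $W_2$. Unfolding the lemma's conditions gives: (a) equality of $t$-counts, a linear equation in $(x, y)$; (b) a sign-matching condition, which after rewriting becomes an equation in $\{t, t^{-1}\}^*$ of the form $p q^x r = s u^y v$, whose solution set in $(x,y)$ is semilinear by Lemma~\ref{2dim-monoid}; and (c) the $A$-constraints $L(i) R(i) \in_G A$ for each $i \in \{0,\ldots, K\}$, where $L(i)$ (resp.\ $R(i)$) is the concatenation of the last (resp.\ first) $i$ $\Sigma^*$-blocks of $W_1$ (resp.\ $W_2$).

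To handle (c), I expand $L(i)$ and $R(i)$ via the block decompositions $u = \alpha_0 t^{\delta_1}\alpha_1\cdots t^{\delta_r}\alpha_r$ and $v = \beta_0 t^{\epsilon_1}\beta_1\cdots t^{\epsilon_s}\beta_s$. A direct calculation reveals a periodic structure: in the ``middle range'' where $L(i)$ reaches into $u^x$ and $R(i)$ reaches into $v^y$, writing $i$ uniquely via residues $\ell_u \in \{0,\ldots,r-1\}$, $\ell_v \in \{0,\ldots,s-1\}$ and quotients $j_u, j_v$, one obtains
\[ L(i)R(i) = \alpha_{r-\ell_u}\cdots\alpha_r \cdot \pi_\Sigma(u)^{j_u} \cdot \pi_\Sigma(u') \cdot e \cdot \pi_\Sigma(v') \cdot \pi_\Sigma(v)^{j_v} \cdot \beta_0 \cdots \beta_{\ell_v}, \]
a knapsack expression in $(j_u, j_v, z_1, \ldots, z_k)$ whose shape depends only on $(\ell_u, \ell_v)$. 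Finitely many additional boundary types cover the ranges where $L(i)$ or $R(i)$ touches $u''$, $u'$, $v''$, or $v'$ rather than the periodic interior. For each of these finitely many types, the hypothesis that $G$ is knapsack semilinear relative to $A$ provides an effectively computable semilinear set $S_{\ell_u, \ell_v} \subseteq \N^{2+k}$ of tuples satisfying the corresponding $A$-constraint.

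Condition (c) now reads: for each type and each admissible $(j_u, j_v)$ (bounded by $x-1, y-1$ and lying on a specific Diophantine line), the tuple $(j_u, j_v, z_1,\ldots,z_k) \in S_{\ell_u, \ell_v}$. Since the $S_{\ell_u, \ell_v}$ and the admissibility constraints are all Presburger-definable, this universal statement defines a semilinear set in $(x, y, z_1, \ldots, z_k)$, effectively computable via closure of semilinear sets under complement, intersection and projection. Intersecting with the semilinear sets from (a) and (b) yields the required representation. The main technical obstacle is the bookkeeping of the boundary types of $i$; once these are correctly enumerated and each recognized as a knapsack-in-$A$ constraint, the knapsack-semilinearity hypothesis and standard Presburger closure finish the argument.
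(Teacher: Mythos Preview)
Your proposal is correct and follows essentially the same approach as the paper: reduce the $G$-constraint via Lemma~\ref{red uv} to the conjunction of a $t$-projection equality (handled by Lemma~\ref{2dim-monoid}) and a family of $A$-constraints indexed by the intermediate Britton-reduction steps, exploit the periodicity in $u$ and $v$ to show these $A$-constraints fall into finitely many knapsack-expression types, and invoke knapsack semilinearity relative to $A$ together with Presburger closure. The paper's only notable shortcut is a preliminary cyclic rotation of $u$ and $v$ to arrange $u''=v''=\varepsilon$, which trims the number of boundary types you would otherwise have to enumerate; it also absorbs your case-split on whether $u,v\in\Sigma^*$ into the same formula rather than treating it separately.
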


\begin{proof}
We first claim that by cyclically rotating $u$ and $v$ we can assume that $u'' = v'' = \varepsilon$. 
We only prove this for $u''$, for $v''$ we can argue analogously.
We can write $u = r u''$ for some word $r$. Then for all $x \in \N$ we have
$u'' u^x u' = u'' (r u'')^x u' = (u'' r)^x u'' u'$.
The word $u'' u'$ is either a prefix of $u'' r$ or we can write $u'' u' = (u'' r) \tilde{u}$ for some 
prefix $\tilde{u}$ of $u'' r$. In the first case, we can simply replace $u'' u^x u'$ in  \eqref{eq-xyz_i}
by  $(u'' r)^x u'' u'$ (note that $u'' r$ is well-behaved since it is a cyclic rotation of the well-behaved word $u = r u''$).
In the second case (where $u'' u' = (u'' r) \tilde{u}$ for some 
prefix $\tilde{u}$ of $u'' r$), we replace $u'' u^x u'$ in  \eqref{eq-xyz_i}
by  $(u'' r)^x \tilde{u}$.  If $L \subseteq \N^{\{x,y,z_1,\ldots,z_k\}}$ is the set of solutions of the resulting $G$-constraint, then
the formula $(x+1, y, z_1, \ldots, z_k) \in L$ describes the set of solution of \eqref{eq-xyz_i}. Clearly, a Presburger formula
for $L$ immediately yields a Presburger formula for $(x+1, y, z_1, \ldots, z_k) \in L$.

By the previous paragraph, it suffices to consider the set of solutions of the $G$-constraint
$$
u^x u' e(z_1,\ldots, z_k) \, v' v^y   \in_H G .
$$
This constraint holds (for certain values of $x,y,z_1,\ldots, z_k$) if and only if 
$u^x u' e(z_1,\ldots, z_k) \, v' v^y$ can be Britton-reduced to a word from $\Sigma^*$
which must be $\pi_\Sigma(u^x u' e(z_1,\ldots, z_k) \, v' v^y)$.
Since $u^x$ and $v^y$ are Britton-reduced for every $x,y \in \N$ we can apply Lemma~\ref{red uv}.

Let $S_u$ be the set of suffixes of $u$ that start with $t^{\pm 1}$ and let $P_{v}$ 
be the set of prefixes of $v$ that end with $t^{\pm 1}$. We define $S_{u'}$ and $P_{v'}$ 
analogously. Then by Lemma~\ref{red uv} the following formula is equivalent to $u^x u' e(z_1,\ldots,z_n) \, v' v^y  \in_H G$
(as usual, $\wedge$ denotes logical conjunction and $\Rightarrow$ denotes logical implication):
\begin{eqnarray*}
& & \!\!\!\!\!\!\!\!\!\!\!   \pi_t(u^x u') = \pi_t(v' v^y)^{-1} \ \wedge  \\
& & \!\!\!\!\!\!\!\!\!\!\!   \forall x' < x \, \forall y' < y :  \bigwedge_{s \in S_u} \bigwedge_{p \in P_v} \pi_t(s \, u^{x'} u') = \pi_t(v' v^{y'} p)^{-1} \ \Rightarrow \ \pi_\Sigma(s \, u^{x'} u' \, e \,  v' v^{y'}p) \in_G A \\
& & \!\!\!\!\!\!\!\!\!\!\!  \phantom{ \forall x' \leq x \, \forall y' \leq y :  } \bigwedge_{s \in S_u} \bigwedge_{p \in P_{v'}} \pi_t(s \, u^{x'} u') = \pi_t(p)^{-1} \ \Rightarrow \ \pi_\Sigma(s \, u^{x'} u' \, e \, p) \in_G A \\
& & \!\!\!\!\!\!\!\!\!\!\!  \phantom{ \forall x' \leq x \, \forall y' \leq y :  } \bigwedge_{s \in S_{u'}} \bigwedge_{p \in P_v} \pi_t(s) = \pi_t(v' v^{y'} p)^{-1} \ \Rightarrow \ \pi_\Sigma(s \, e \,  v' v^{y'}p) \in_G A \\
& & \!\!\!\!\!\!\!\!\!\!\!  \phantom{ \forall x' \leq x \, \forall y' \leq y :  } \bigwedge_{s \in S_{u'}} \bigwedge_{p \in P_{v'}} \pi_t(s) = \pi_t(p)^{-1} \ \Rightarrow \ \pi_\Sigma(s\, e \, p) \in_G A .
\end{eqnarray*}
Let us explain the intuition behind this formula; see also Figure~\ref{fig-eq-xyz_i} which shows a van Kampen diagram for 
$u^8 u' e(z_1,\ldots, z_k) \, v' v^5 =_H g$.

The formula $\pi_t(u^x u') = \pi_t(v' v^y)^{-1}$ expresses that every $t$ (resp., $t^{-1}$) in $u^x u'$ cancels with a $t^{-1}$ (resp., $t$) in $v' v^y$.
If this is not the case, then $u^x u' e(z_1,\ldots, z_k) \, v' v^y$ cannot be Britton-reduced to a word over $\Sigma$.
The other four lines of the formula ensure that the Britton-reduction of $u^x u' e(z_1,\ldots, z_k) \, v' v^y$ to a word over $\Sigma$ actually exists.
This Britton-reduction proceeds from right to left in Figure~\ref{fig-eq-xyz_i}. In each reduction step, the right-most slice in Figure~\ref{fig-eq-xyz_i} is eliminated.
Assume that the Britton-reduction has already eliminated the part to the right of the shaded slice.
The special form of our HNN-extension \eqref{eq-HNN-C} implies that if a word $w \in (\Sigma \cup \{t,t^{-1}\})^*$ is Britton-reduced
to a word over $\Sigma$, then we have $w =_H \pi_\Sigma(w)$ (every Britton-reduction step is of the form $t^{-1} a t \to a$ or 
$t a t^{-1} \to a$ for $a \in A$). Hence, we must have $a =_G \pi_\Sigma(s \, u^{4} u' \, e \,  v' v^{2}p)$ in Figure~\ref{fig-eq-xyz_i}.
In order to eliminate the shaded slice, the following must hold:
\begin{itemize}
\item $a$ must belong to the subgroup $A$, i.e., $\pi_\Sigma(s \, u^{4} u' \, e \,  v' v^{2}p) \in_G A$,
\item the first letter of $s$ (a suffix of $u$) must be $t$ (or $t^{-1}$), and
\item the last letter of $p$ (a prefix of $v$) must be $t^{-1}$ (or $t$); note that $v$ goes from right to left.
\end{itemize}
The second line in the above formula ensures that $\pi_\Sigma(s \, u^{x'} u' \, e \,  v' v^{y'}p) \in_G A$ whenever $x' < x$, $y' < y$,
$s$ is a suffix of $u$ that starts with $t^{\pm 1}$, $p$ is a prefix of $v$ that ends with $t^{\pm 1}$ and 
$\pi_t(s \, u^{x'} u') = \pi_t(v' v^{y'} p)^{-1}$ holds. This ensures that 
$s \, u^{x'} u' \, e \,  v' v^{y'}p$ is the group element represented by one of the vertical edges in Figure~\ref{fig-eq-xyz_i}.
The other parts of the above formula deal with the cases where the vertical edge has an endpoint in $u'$ or $v'$.
Altogether this ensures that all the vertical edges in Figure~\ref{fig-eq-xyz_i} represent elements of the subgroup $A$.

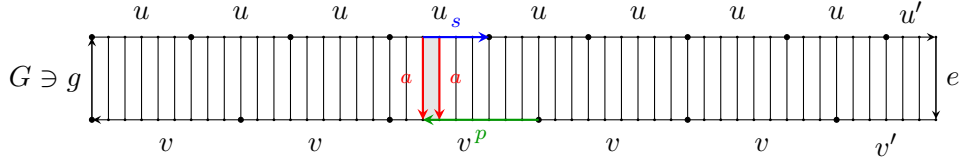
\begin{figure}[t]
\begin{tikzpicture}[, scale=1.1]
\node (A) at (0,0) {};
\node (B) at (0,1) {};
\node (C) at (10.2,0) {};
\node (D) at (10.2,1) {};
\draw[ -stealth, shorten <= -3.5, shorten >= -3.5] (A) -- (B) node[midway,left] {$G \ni g$};
\draw[ -stealth, shorten <= -3.5, shorten >= -3.5] (D) -- (C) node[midway,right] {$e$};
\draw[ -stealth, shorten <= -3.5, shorten >= -3.5] (C) -- (A)  {};
\draw[ -stealth, shorten <= -3.5, shorten >= -3.5] (B) -- (D)  {};

 \draw [fill=white!80!gray] (4,0) rectangle (4.2,1);
 
\foreach \x in {0,0.2,...,10.2}
{ \draw[fill] (\x,0) circle (.1mm);
  \draw[fill] (\x,1) circle (.1mm);
\draw (\x,1) -- (\x,0)  {};  
}
\foreach \x in {0,1.2,...,9.6}
{ \draw[fill] (\x,1) circle (.3mm);
}
\foreach \x in {0,1.8,3.6,5.4,7.2,9} 
{ \draw[fill] (\x,0) circle (.3mm);
   }
\foreach \x in {0,1.8,3.6,5.4,7.2} 
{  \node at (\x+.9,-.3) {$v$}; }
\foreach \x in {0,1.2,...,8.4}
{ \node at (\x+.6,1.3) {$u$}; }
 \node at (9.9,1.3) {$u'$}; 
 \node at (9.6,-.3) {$v'$}; 
 \draw[line width=0.28mm, red,-stealth] (4.2,1) -- (4.2,0)  node[midway,right=-0.08] {\scriptsize{$a$}}; 
 \draw[line width=0.28mm, red,-stealth] (4.0,1) -- (4.0,0) node[midway,left=-0.08] {\scriptsize{$a$}};   
 \draw[line width=0.28mm, blue, -stealth] (4.0,1) -- (4.8,1)  node[midway,above=-0.08] {\scriptsize{$s$}}; 
 \draw[line width=0.28mm, black!40!green, stealth-] (4.0,0) -- (5.4,0)  node[midway,below=-0.08] {\scriptsize{$p$}}; 

\end{tikzpicture}
\caption{The idea behind the proof of Lemma~\ref{2dim}. The fact that all vertical lines represent elements from the subgroup $A$ is expressed
by the formula from the proof.} \label{fig-eq-xyz_i}
\end{figure}

By Lemma~\ref{2dim-monoid} the solution set of the equation $\pi_t(u^x u') = \pi_t(v' v^y)^{-1}$  (which is interpreted over the free monoid $\{t,t^{-1}\}^*$)
 is semilinear. To see this let $w = v^{-1}$ and $w' = (v')^{-1}$. 
Then $\pi_t(u^x u') = \pi_t(v' v^y)^{-1}$ is equivalent to $\pi_t(u)^x \pi_t(u') = \pi_t(w)^y \pi_t(w') $. 
For the same reason, also the equation $\pi_t(s \, u^{x'} u') = \pi_t(v' v^{y'} p)^{-1}$ is equivalent to a semilinear constraint.
The solution sets of the equations $ \pi_t(s) = \pi_t(v' v^{y'} p)^{-1}$ and $\pi_t(s) = \pi_t(v' v^{y'} p)^{-1}$ are finite.
Moreover, each of the $A$-constraints
($\pi_\Sigma(s \, u^{x'} u' \, e \,  v' v^{y'}p) \in_G A$ etc.) is
equivalent to a semilinear constraint because $G$ is knapsack semilinear relative to $A$.
Hence, the above formula is equivalent to a Presburger formula and therefore defines a semilinear set.
\end{proof}

\begin{rem} \label{remark-1dim} 
There are variations of Lemma~\ref{2dim}, where the $G$-constraint has one of the following forms:
\begin{align*}
u' e(z_1,\ldots, z_k) \, v' v^y v''  & \in_H  G, \\
u' u^x u'' e(z_1,\ldots, z_k) \, v'  & \in_H  G, \\ 
\text{or } u' e(z_1,\ldots, z_k) \, v' & \in_H  G
\end{align*}
with $u,u',u'',v,v',v''$ as in Lemma~\ref{2dim}. In all cases, the set of solutions of the $G$-constraint can be shown to be effectively semilinear
using the arguments from the proof of Lemma~\ref{2dim}.
\end{rem}

\begin{defi} \label{def-reduction-HNN}
We define a reduction relation on tuples over $\BR(H)$ of arbitrary length.
Take $u_1, u_2, \ldots, u_m \in \BR(H)$ and $1 \leq i < j \leq m$. Then we have
\begin{align*}
& (u_1, u_2, \ldots, u_m)  \to  (u_1, \ldots, u_{i-1},\pi_\Sigma(u_{i}), u_{i+1}, \ldots, u_{j-1},  \pi_\Sigma(u_{j}), u_{j+1}, \ldots, u_m)
\end{align*}
if $\pi_t(u_i) \neq \varepsilon \neq \pi_t(u_j)$, $u_{i+1} \cdots u_{j-1} \in \Sigma^*$ and 
$u_i u_{i+1} \cdots u_{j-1} u_{j} \in_H G$. Note that this implies that $u_i u_{i+1} \cdots u_{j-1} u_{j}  =_H \pi_\Sigma(u_i) u_{i+1} \cdots u_{j-1} \pi_\Sigma(u_{j})$.
A concrete sequence of such rewrite steps leading to a tuple where all entries belong to $\Sigma^*$
is a {\em $G$-reduction} of the initial tuple, and the initial tuple is called {\em $G$-reducible}.
We also say that $u_i$ and $u_j$ {\em matched in a $G$-reduction}.
\end{defi}

A $G$-reduction of a tuple $(u_1, u_2, \ldots, u_m)$ can be seen as
a witness for the fact that $u_1 u_2 \cdots u_m \in_H G$. 
On the other hand, $u_1 u_2 \cdots u_m \in_H G$ does not necessarily imply
that $(u_1, u_2, \ldots, u_m)$ has a $G$-reduction.
But we can show that $u_1 u_2 \cdots u_m \in_H G$ implies that $(u_1, u_2, \ldots, u_m)$ can refined
(by factorizing the $u_i$) such that the resulting refined tuple has a $G$-reduction. Moreover,
it is important that we have an upper bound on the length of the refined tuple ($4m$ in Lemma~\ref{Britton-lemma-reduction} below)
that only depends on $m$  and not on the words
$u_1, u_2, \ldots, u_m$.

We say that the tuple 
$(v_1, v_2, \dots, v_n)$ is a \emph{refinement} of the tuple $(u_1, u_2, \dots, u_m)$ if there exist factorizations $u_i=u_{i,1} \cdots u_{i,k_i}$ in $(\Sigma \cup \{t,t^{-1}\})^*$ such that
$k_i=1$ whenever $u_i \in \Sigma^*$ and
$(v_1, v_2, \dots, v_n)=(u_{1,1}, \dots, u_{1,k_1}, \dots, u_{m,1}, \dots, u_{m,k_m})$.

\begin{lem} \label{Britton-lemma-reduction}
Let $m \geq 2$ and $u_1, u_2, \ldots, u_m \in \BR(H)$. If $u_1 u_2 \cdots u_m \in_H G$,
then there exists a $G$-reducible refinement of $(u_1, u_2, \dots , u_m)$ that has length at most
$4m$.
\end{lem}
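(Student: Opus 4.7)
The plan is to Britton-reduce $u_1u_2\cdots u_m$ to $\pi_\Sigma(u_1\cdots u_m)\in\Sigma^*$ (such a reduction exists by Britton's lemma since $u_1\cdots u_m\in_H G$, and the assumption $\varphi=\mathrm{id}$ makes each rewrite step $t^{-\alpha}wt^\alpha\to w$ preserve $\pi_\Sigma$), and to read off from the resulting non-crossing matching $\mu$ on the $t$-letters of $u_1\cdots u_m$ both the refinement and the sequence of $G$-reduction steps.

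The first structural observation is that no two $\mu$-matched $t$-letters lie in the same $u_i$. If they did, an innermost such pair inside $u_i$ would leave only $\Sigma^*$-content between them; that content, being valid for a pin reduction, would have to lie in $A$, producing a pin in $u_i$ and contradicting $u_i\in\BR(H)$. Combined with non-crossingness of $\mu$, this forces that inside each $u_i$ every \emph{closing} $t$-letter (whose $\mu$-partner lies in some $u_j$ with $j<i$) precedes every \emph{opening} one ($j>i$), and that within both blocks the partner indices are non-increasing from left to right. I then partition the $t$-letters of $u_i$ into $g_i$ groups of consecutive letters sharing a single partner index, and refine $u_i$ into $\max(g_i,1)$ consecutive pieces --- one per group, with the adjacent $\Sigma^*$-content absorbed into a group piece. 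Every refined piece is then either entirely in $\Sigma^*$ or has all of its $t$-letters $\mu$-paired with the $t$-letters of exactly one other refined piece, and $\mu$ transports to a non-crossing matching on the refined pieces.

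$G$-reducibility follows by processing this matching innermost-first: at each step pick a currently innermost pair $(v_i,v_j)$; non-crossingness together with the prior reductions makes every $v_k$ with $i<k<j$ lie in $\Sigma^*$, and $v_iv_{i+1}\cdots v_j\in_H G$ because all of its $t$-letters cancel pairwise under the induced Britton reduction, so Definition~\ref{def-reduction-HNN} applies. For the length bound, each group at $u_i$ corresponds bijectively to a distinct partner index of $u_i$, so $\sum_i g_i$ equals twice the number of edges of the simple graph on $\{1,\ldots,m\}$ whose edges are the distinct tuple-level arcs; this graph inherits non-crossingness from $\mu$, and since a non-crossing simple graph on $m$ collinear points has at most $2m-3$ edges, we obtain $\sum_i g_i\leq 4m-6$. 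Adding the pieces contributed by the $u_i$'s that lie in $\Sigma^*$ and handling the degenerate situations by inspection (noting in particular that exactly one $u_i$ containing a $t$-letter is impossible, as that would leave a $t$-letter unmatched) yields total refinement length at most $4m$. The main obstacle will be the structural analysis of closings versus openings inside a single $u_i$ and verifying that $\mu$ lifts cleanly to a non-crossing matching of refined pieces compatible with innermost-first $G$-reductions; once this is set up, the counting is a short graph-theoretic observation.
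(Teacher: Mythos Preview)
Your approach is correct and takes a genuinely different route from the paper. The paper argues by induction on the measure $\gamma(\overline{u})+\tau(\overline{u})$ (where $\gamma$ counts adjacent pairs $(i,j)$ with $u_{i+1},\dots,u_{j-1}\in\Sigma^*$ and $u_i\cdots u_j$ not Britton-reduced, and $\tau$ counts the $u_i$ containing $t^{\pm1}$): it locates one such pair, invokes Lemma~\ref{red uv} to split $u_i=u_i'r$, $u_j=s u_j'$ with $rs\in_H G$, and recurses on the shorter tuple, tracking that each step adds at most two pieces. You instead fix the full Britton reduction up front, extract its non-crossing matching $\mu$ on $t$-letters, and derive the refinement globally from the monotone block structure of partner indices inside each $u_i$; the length bound then drops out of the outerplanar edge bound $2n-3$. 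Your structural claims (closings precede openings, partner indices are monotone in each block, hence each partner index occurs contiguously and the piece-level matching is well-defined and non-crossing) are all correct, and the innermost-first processing does yield a valid $G$-reduction. Your argument is more global and in fact gives the sharper constant $4m-3S-6$ when $m-S\geq 2$ (with $S$ the number of $u_i\in\Sigma^*$), making transparent why the bound is controlled at the tuple level rather than the letter level; the paper's inductive argument is more local and self-contained, avoiding the appeal to outerplanarity at the cost of a less explicit combinatorial picture.
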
 

\begin{proof}
Let $\overline{u} = (u_1, u_2, \ldots, u_m)$.
Let us define $\gamma(\overline{u})$ as the number of pairs $(i,j)$ with $1 \leq i < j \leq m$ such that
$u_i u_{i+1} \cdots u_j$ is not Britton-reduced and $u_{i+1} \cdots u_{j-1} \in \Sigma^*$. Note that 
$\pi_t(u_i) \neq \varepsilon \neq \pi_t(u_j)$ for such a pair $(i,j)$.
Moreover, if we have two pairs $(i,j)$ and $(k, \ell)$ of this form, then either $j \leq k$ or $\ell \leq i$.
Let  $\tau(\overline{u})$ be the number of $i$ such that $\pi_t(u_i) \neq \varepsilon$.

We prove by induction over 
$\gamma(\overline{u}) + \tau(\overline{u})$ 
that there exists a $G$-reducible refinement of $\overline{u}$ that has length at most
$2\gamma(\overline{u}) + \tau(\overline{u})+ m \leq 4m$.

The case $m=2$ is trivial: either $\gamma(u_1, u_2) = \tau(u_1, u_2) = 0$ and $u_1, u_2 \in \Sigma^*$
or $\gamma(u_1, u_2) = 1$,  $\tau(u_1,u_2) = 2$ in which case $(u_1, u_2)$ must reduce in one step to $(\pi_\Sigma(u_1), \pi_\Sigma(u_2))$.
If $m \geq 3$ then $u_1 u_2 \cdots u_m$ must contain a pin. Since every $u_i$ is Britton-reduced, there must exist $i<j$
 such that $u_i u_{i+1} \cdots u_j$ is not Britton-reduced
and $u_{i+1} \cdots u_{j-1} \in \Sigma^*$.
By Lemma~\ref{red uv} we can factorize $u_i$ and $u_j$ in
$(\Sigma \cup \{t,t^{-1}\})^*$ as
$u_i=u'_i r$ and $u_j = s u'_j$ such that $r s \in_H G$
and $u_i u_{i+1} \cdots u_{j-1} u_j =_H  u'_i \pi_\Sigma(r) u_{i+1} \cdots u_{j-1} \pi_\Sigma(s) u'_j$ is Britton-reduced.
Note that $r$ and $s$ must contain $t$ or $t^{-1}$. Moreover, we can assume that 
either $u'_i = \varepsilon$ or $u'_i$ ends with $t^{\pm 1}$ (if $u'_i$ ends with a non-empty word over $\Sigma$, we 
can remove this word from $u'_i$ and add it to $r$)
and, similarly, either $u'_j = \varepsilon$ or $u'_j$ begins with $t^{\pm 1}$.

\medskip
\noindent
{\em Case 1.}  $u'_i$ and $u'_j$ both contain $t^{\pm 1}$.
Then we have
$$
\gamma(u_1, \ldots, u_{i-1}, u'_i, \pi_\Sigma(r), u_{i+1}, \ldots, u_{j-1}, \pi_\Sigma(s), u'_j,u_{j+1}, \ldots, u_m) <
\gamma(\overline{u}) 
$$
since $u'_i \pi_\Sigma(r) u_{i+1} \cdots u_{j-1} \pi_\Sigma(s) u'_j$ is Britton-reduced and $u'_i$ and $u'_j$ both contain $t^{\pm 1}$.
Moreover, we have
$$
\tau(u_1, \ldots, u_{i-1}, u'_i, \pi_\Sigma(r), u_{i+1}, \ldots, u_{j-1}, \pi_\Sigma(s), u'_j,u_{j+1}, \ldots, u_m) =
\tau(\overline{u}) .
$$
Hence, we can apply the induction hypothesis to the tuple 
\begin{equation}\label{eq-tuple1}
(u_1, \ldots, u_{i-1}, u'_i, \pi_\Sigma(r), u_{i+1}, \ldots, u_{j-1}, \pi_\Sigma(s), u'_j,u_{j+1}, \ldots, u_m) .
\end{equation}
It must have a $G$-reducible refinement of length at most
$$
2 (\gamma(\overline{u}) - 1) + \tau(\overline{u}) + m+2 = 2\gamma(\overline{u}) + \tau(\overline{u}) + m.
$$ 
In this refinement $\pi_\Sigma(r), \pi_\Sigma(s) \in \Sigma^*$ will not be
factorized into more than one factor. We therefore can take the refinement of \eqref{eq-tuple1} and replace $\pi_\Sigma(r)$ and $\pi_\Sigma(s)$ by $r$ and $s$, respectively.
This leads to  a $G$-reducible of our original tuple $\overline{u}$ having length at most
$2\gamma(\overline{u}) + \tau(\overline{u})+ m$.

\medskip
\noindent
{\em Case 2.} $u'_i = \varepsilon$ and $u'_j$ begins with $t^{\pm 1}$. Then we have
$$
\gamma(u_1, \ldots, u_{i-1}, \pi_\Sigma(u_i), u_{i+1}, \ldots, u_{j-1}, \pi_\Sigma(s), u'_j,u_{j+1}, \ldots, u_m) \leq
\gamma(\overline{u}) 
$$
and 
$$
\tau(u_1, \ldots, u_{i-1}, \pi_\Sigma(u_i), u_{i+1}, \ldots, u_{j-1}, \pi_\Sigma(s), u'_j,u_{j+1}, \ldots, u_m) <
\tau(\overline{u}) .
$$
We can therefore apply the induction hypothesis to the tuple
\begin{equation}\label{eq-tuple2}
(u_1, \ldots, u_{i-1}, \pi_\Sigma(u_i), u_{i+1}, \ldots, u_{j-1}, \pi_\Sigma(s), u'_j,u_{j+1}, \ldots, u_m)
\end{equation}
and obtain a $G$-reducible refinement of length at most 
$$
2 \gamma(\overline{u}) + \tau(\overline{u}) - 1 + m+1 = 2\gamma(\overline{u}) + \tau(\overline{u})+ m.
$$
Replacing $\pi_\Sigma(u_i)$ by $u_i$ and $\pi_\Sigma(s)$ by $s$ in this refinement yields a $G$-reducible refinement of $\overline{u}$.

The remaining cases where  (i) $u'_j = \varepsilon$ and $u'_i$ ends with $t^{\pm 1}$ or (ii) $u'_i = u'_j = \varepsilon$ are analogous to case 2.
This concludes the proof of the lemma.
\end{proof}
Now we are able to prove the main theorem of this section.

\begin{thm}\label{HNNsemilin}
Let $H=\langle G, t \mid t^{-1}at=a~(a\in A)\rangle$ be an HNN-extension, where $G$ is 
knapsack semilinear relative to $\{1,A\}$.
Then $H$ is knapsack semilinear.
\end{thm}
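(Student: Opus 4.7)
The plan is to reduce the equation $\sigma(e) =_H 1$ to the conjunction of the two conditions provided by Lemma~\ref{lemma-id-in-H}: namely $\sigma(e) \in_H G$ and $\sigma(\pi_\Sigma(e)) =_G 1$. The second conjunct is a knapsack equation in $G$ over the alphabet $\Sigma$, so its solution set is effectively semilinear by the hypothesis that $G$ is knapsack semilinear (the relative-to-$\{1\}$ part of the assumption). The whole work therefore amounts to showing that the $G$-constraint $\sigma(e) \in_H G$ defines an effectively semilinear subset of $\N^{X_e}$.

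For this, I would first apply Lemma~\ref{Britton-reduced-powers} to rewrite each factor $u_i^{x_i}$ as $s_i \bar{u}_i^{x_i} p_i$ with $\bar{u}_i \in \BR(H)$ well-behaved; absorbing $s_i$, $p_i$, and the original $v_j$ into their neighbours (and Britton-reducing the result) yields
\[ \sigma(e) =_H w_0 \bar{u}_1^{x_1} w_1 \cdots \bar{u}_k^{x_k} w_k \]
for fixed words $w_j \in \BR(H)$. Because each $\bar{u}_i$ is well-behaved, the tuple $T(x_1,\ldots,x_k) = (w_0, \bar{u}_1^{x_1}, w_1, \ldots, \bar{u}_k^{x_k}, w_k)$ has all components in $\BR(H)$ for every choice of the $x_i$, and by Lemma~\ref{Britton-lemma-reduction} the condition $\sigma(e) \in_H G$ is equivalent to the existence of a $G$-reducible refinement of $T$ of length at most $M := 4(2k+1)$.

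The heart of the proof is to enumerate all finitely many \emph{reduction templates}. A template fixes (a) how many pieces each $\bar{u}_i^{x_i}$ and each $w_j$ is split into, with the total length of the refined tuple at most $M$; (b) for each cut that falls inside a $\bar{u}_i^{x_i}$, the residue of the cut position modulo $|\bar{u}_i|$ (a bounded choice, determining how the boundary copy of $\bar{u}_i$ splits as suffix$\cdot$prefix); and (c) the properly nested pairing structure $(i_1,j_1),\ldots,(i_s,j_s)$ along which the $G$-reduction is performed. For a fixed template, each piece of $\bar{u}_i^{x_i}$ takes the form $\alpha \bar{u}_i^{y} \beta$ for a fresh variable $y$ and a fixed suffix $\alpha$ and prefix $\beta$ of $\bar{u}_i$, and the $y$'s arising from a single $\bar{u}_i^{x_i}$ sum (up to constants) to $x_i$ — a semilinear linkage. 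Each pairing $(i_r,j_r)$ contributes a $G$-constraint of the shape
\[ \alpha_\ell \bar{u}_\ell^{y_\ell} \beta_\ell \cdot w' \cdot \alpha_{\ell'} \bar{u}_{\ell'}^{y_{\ell'}} \beta_{\ell'} \in_H G, \]
where $w'$ is a knapsack expression over $\Sigma$ assembled from the $\pi_\Sigma$-images of the intervening (already reduced) pieces together with the $\Sigma$-fragments of the template. This is precisely the input shape of Lemma~\ref{2dim}, whose output is an effectively semilinear set, and it is at this step that knapsack semilinearity of $G$ relative to $A$ is invoked. Degenerate cases in which one or both endpoints of a pairing come from a constant $w_j$ (no power involved) are handled by the one-power and zero-power variants described in Remark~\ref{remark-1dim}. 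Taking the conjunction of the matching constraints and balance equations, projecting out the fresh variables, and taking the union over all (finitely many) templates produces an effective semilinear description of $\{\sigma : \sigma(e) \in_H G\}$; intersecting with the semilinear solution set of $\sigma(\pi_\Sigma(e)) =_G 1$ completes the argument.

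The main obstacle is verifying that every pairing produced by a $G$-reduction genuinely fits the input shape of Lemma~\ref{2dim}. The crucial observation is that at the moment a pair $(i_r,j_r)$ is collapsed, all entries strictly between positions $i_r$ and $j_r$ of the refined tuple have already been replaced by their $\pi_\Sigma$-images, so the intervening part is a word in $\Sigma^*$ which depends on the fresh exponent variables as a knapsack expression; this matches exactly the middle $e(z_1,\ldots,z_k)$ allowed by Lemma~\ref{2dim}. Once this reduction is in place, the only remaining ingredients are the standard effective closure properties of semilinear sets under intersection, projection, and finite union.
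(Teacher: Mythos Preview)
Your proposal is correct and follows essentially the same approach as the paper's proof: reduce via Lemma~\ref{lemma-id-in-H}, make the periods well-behaved with Lemma~\ref{Britton-reduced-powers}, enumerate the finitely many $G$-reducible refinements of bounded length guaranteed by Lemma~\ref{Britton-lemma-reduction}, and resolve each pairing constraint with Lemma~\ref{2dim} (and its degenerate variants in Remark~\ref{remark-1dim}). What you call a ``reduction template'' is exactly the nondeterministic guess the paper makes in its Steps~1 and~2, and your observation that the intervening entries are already in $\Sigma^*$ at the moment of collapse is the point that makes Lemma~\ref{2dim} applicable.
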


\begin{proof}
The proof of the theorem is based on  ideas from \cite{FiLoZe2021}.
Consider a knapsack expression 
$$
e(x_2,x_4,\ldots,x_m) = u_1 u_2^{x_2} u_3 u_4^{x_4} u_5 \cdots u_{m}^{x_{m}} u_{m+1} 
$$
with $m$ even (later it will convenient to have only variables with an even index).
We can assume that all $u_i$ are Britton reduced. Moreover,
by Lemma~\ref{Britton-reduced-powers}, we can assume that every $u_{i}$ with $i$ even is well-behaved
and moreover non-empty (otherwise we can remove the power $u_i^{x_i}$).

In the following we describe an algorithm that computes a semilinear representation of $\Sol_{H}(e)$ in three main steps.
The algorithm transforms logical statements into equivalent logical statements
(we do not have to define the precise logical language; the meaning of the statements should be always clear).
Every statement contains the variables $x_2, x_4, \ldots, x_{m}$ from our knapsack expression and equivalence of two statements means
that for every valuation $\sigma : \{x_2, x_4, \ldots, x_{m}\} \to \N$ the two statements yield the same truth value.
We start with the statement $e(x_2,x_4,\ldots,x_{m}) =_H 1$ and end with a Presburger formula. In each of the three steps we transform the current
statement $\Phi$ into an equivalent disjunction $\bigvee_{i=1}^n \Phi_i$. 
We can therefore view the whole process
as a branching tree of depth three, where the nodes are labelled with statements. If a node is labelled with $\Phi$ and its children are labelled
with $\Phi_1, \ldots, \Phi_n$ then $\Phi$ is equivalent to $\bigvee_{i=1}^n \Phi_i$. The leaves of the tree are labelled with 
Presburger formulas with free variables $x_2, x_4, \ldots, x_{m}$. 
We will concentrate on a single branch of this tree, which can be viewed as a sequence of nondeterministic guesses. 

Let $N_\Sigma \subseteq [1,m+1]$ be the set of indices such that $u_i \in \Sigma^*$ and 
let $N_t = [1,m+1] \setminus N_\Sigma$ be the set of indices such that $\pi_t(u_i) \neq \varepsilon$. 
Moreover, let us define $w_{i} = u_{i}$ for $i$ odd and
$w_{i} = u_{i}^{x_i}$ for $i$ even.

By Lemma~\ref{lemma-id-in-H}, $e =_H 1$ is equivalent to
$e \in_H G \, \wedge \, \pi_\Sigma(e) =_G 1$.
Since $G$ is knapsack semilinear, the set $\Sol_G(\pi_\Sigma(e))$ is semilinear. Hence,
it suffices to show that the set of all $(x_2,x_4,\ldots, x_{m}) \in \N^{m/2}$ with 
$e(x_2,x_4\ldots,x_{m}) \in_H G$ is semilinear. Here, we will use the assumption that
$G$ is knapsack semilinear relative to $A$.

\medskip
\noindent
{\em Step 1: Applying Lemma~\ref{Britton-lemma-reduction}.}  
We construct a disjunction $\Psi$ from the knapsack expression $e$ using  Lemma~\ref{Britton-lemma-reduction}. More precisely, we construct 
$\Psi$ by nondeterministically guessing the following data:
\begin{enumerate}[(i)]
\item symbolic factorizations $w_i = y_{i,1} \cdots y_{i,k_i}$ in 
$(\Sigma \cup \{t,t^{-1}\})^*$ for all $i \in [1,m+1]$.
Here the $y_{i,j}$ are existentially quantified variables that take values 
in $\BR(H)$. Later, these variables will be eliminated. The guessed $k_i$ must satisfy
$k_i \geq 1$ for all $i$, 
$k_i=1$ for all $i \in N_\Sigma$, 
and $\sum_{1 \le i \le m+1} k_i \leq 4(m+1)$. 
\item a $G$-reduction (according to Definition~\ref{def-reduction-HNN}) of the tuple
$$
(y_{1,1} \cdots y_{1,k_1}, \ldots, y_{m+1,1} \cdots y_{m+1,k_{m+1}}).
$$
\end{enumerate}
For the $w_i$ with $i$ odd we can of course guess concrete words for the variables $y_{i,1}, \ldots, y_{i,k_i}$.
Later, we will do this, but in order to simplify the notation, we will still use the names $y_{i,1}, \ldots, y_{i,k_i}$ for these words.

For every specific guess in (i) and (ii)
we write down the conjunction of the following formulas:
\begin{itemize}
\item the equation $w_i = y_{i,1} \cdots y_{i,k_i}$ from (i) (every variable $y_{i,j}$ is existentially quantified) and
\item all $G$-constraints that result from $G$-reduction steps in the guessed $G$-reduction (this will made more precise in Step 2 below).
\end{itemize}
The formula $\Psi$ is the disjunction of the above existentially quantified conjunctions, taken over all possible guesses in (i) and (ii).
This formula is equivalent to the $G$-constraint $e \in_H G$.

\medskip
\noindent
{\em Step 2: Eliminating the equations $w_{i} = y_{i,1} \cdots y_{i,k_i}$.} 
For an odd $i$ (i.e., $w_i = u_i$) we can eliminate this equation by guessing
a concrete factorization $u_i = u_{i,1} \cdots u_{i,k_i}$ and then replace the 
equation $w_{i} = y_{i,1} \cdots y_{i,k_i}$ by the conjunction
$$
\bigwedge_{j=1}^{k_i} y_{i,j} = u_{i,j} .
$$
For an even $i$ (i.e., $w_i = u_i^{x_i}$) we can eliminate the equation $w_{i} = y_{i,1} \cdots y_{i,k_i}$
by guessing a symbolic factorization of $u_i^{x_i}$ into $k_i$ factors.
A specific guess leads to a formula
\begin{equation} \label{eq-y_ij}
\bigwedge_{j=1}^{k_i} y_{i,j} = u''_{i,j} u_i^{x_{i,j}} u'_{i,j+1} \ \wedge \ x_i = c_i + \sum_{j=1}^{k_i} x_{i,j} .
\end{equation}
Here, every $u'_{i,j}$ ($2 \le j \le k_i$) is a proper prefix of $u_i$ and every $u''_{i,j}$ ($2 \le j \le k_i$) is a proper suffix of $u_i$ such that either
$u_{i} = u'_{i,j} u''_{i,j}$ or $u'_{i,j} = u''_{i,j}=\varepsilon$ for all $2 \leq  j \leq k_i$. We set $u'_{i,k_i+1} = u''_{i,1} =\varepsilon$ in the above formula.
Moreover, $c_i$ is the number of $2 \leq j \leq k_i$ for which $u'_{i,j} \neq \varepsilon \neq u''_{i,j}$ holds.
The $u'_{i,j}$ and $u''_{i,j}$ are nondeterministically guessed.

We also guess which of the new exponent variables $x_{i,j}$ are zero and which of the $x_{i,j}$ are non-zero.
If we guess $x_{i,j}=0$, then we replace $x_{i,j}$ in \eqref{eq-y_ij} by $0$. This yields the 
equation $y_{i,j} = u''_{i,j} u'_{i,j+1}$. If we guess $x_{i,j}>0$, then we add this constraint to \eqref{eq-y_ij}.
After this step, it is determined whether a $y_{i,j}$ contains $t$ or $t^{-1}$ (for $i$ even as well as for $i$ odd). Those $y_{i,j}$ must be matched
by $G$-reduction steps in the $G$-reduction that we guessed in Step 1. In fact, what we guessed in Step 1 is such a matching.

\medskip
\noindent
{\em Step 3: Eliminating $G$-constraints.}  Assume that $y_{i,j}$ and $y_{k,\ell}$ are matched in the guessed $G$-reduction.
W.l.o.g. assume that $i<k$ or $i=k$ and $j < \ell$, i.e., $(i,j)$ is lexicographically before $(k,\ell)$.
Then our formula contains the $G$-constraint
$$
y_{i,j}  \left(\prod_{(i,j) \prec (p,q) \prec(k,\ell)} \pi_\Sigma(y_{p,q})\right) y_{k,\ell} \in_H G ,
$$
where $\prec$ is the strict lexicographic order on pairs of natural numbers. 
In this constraint, we can replace every $y_{a,b}$ with $a$ even by $u''_{i,j} u_i^{x_{i,j}} u'_{i,j+1}$
(or $u''_{a,b} u'_{a,b+1}$ in case $x_{i,j}=0$ was guessed), whereas every $y_{a,b}$ with $a$ odd can
be replaced by the concrete word $u_{a,b}$.
If both  $y_{i,j}$ and $y_{k,\ell}$ contain an exponent variable we obtain 
a $G$-constraint of the form \eqref{eq-xyz_i}. If $y_{i,j}$ or $y_{k,\ell}$ is a concrete word we obtain 
a $G$-constraint having one of the three forms listed in Remark~\ref{remark-1dim}. 
Lemma~\ref{2dim} and Remark~\ref{remark-1dim} imply that in each case,  
the set of solutions of the $G$-constraint is semilinear.
This concludes the proof of the theorem.
\end{proof}

For a subset $S \subseteq G$ of the group $G$ one defines the {\em centralizer} 
$$C(S)=\{h\in G \mid gh=hg \text{ for all } g \in S \}.
$$
The HNN-extension $H = \langle G,t \mid t^{-1} a t = a \,  (a \in C(S)) \rangle$ is an \emph{extension of the centralizer $C(S)$}.
Extensions of centralizers were first studied in \cite{MyasnikovR96} in the context of exponential groups. 

\begin{thm} \label{thm-ext-centralizer}
If $G$ is knapsack semilinear and $H$ is an extension of a centralizer $C(S)$ with $S$ finite, then $H$ is knapsack semilinear as well.
\end{thm}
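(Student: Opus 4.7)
The plan is to reduce Theorem~\ref{thm-ext-centralizer} to Theorem~\ref{HNNsemilin}. That is, I aim to show that if $G$ is knapsack semilinear and $S \subseteq G$ is finite, then $G$ is automatically knapsack semilinear relative to $\{1, C(S)\}$. Once we have this, Theorem~\ref{HNNsemilin} applies directly and yields that $H$ is knapsack semilinear. Knapsack semilinearity relative to $1$ is just knapsack semilinearity of $G$, so the content is to prove knapsack semilinearity relative to the subgroup $C(S)$.

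The key observation is that membership in $C(S)$ is captured by finitely many commutation identities. Writing $S = \{s_1, \dots, s_r\}$, an element $g \in G$ lies in $C(S)$ if and only if $g s_i g^{-1} s_i^{-1} =_G 1$ for every $1 \le i \le r$. So fix an arbitrary knapsack expression
\[
 \E(z_1,\dots,z_k) = v_0 u_1^{z_1} v_1 \cdots u_k^{z_k} v_k
\]
over $\Sigma$. I would introduce its formal inverse, which is again an exponent expression over $\Sigma$ in the same variables by rewriting negative powers through inversion of the bases:
\[
 \E^{-1}(z_1,\dots,z_k) = v_k^{-1} (u_k^{-1})^{z_k} v_{k-1}^{-1} \cdots (u_1^{-1})^{z_1} v_0^{-1}.
\]
For each $i$, form the exponent expression $\E_i = \E \cdot s_i \cdot \E^{-1} \cdot s_i^{-1}$ (here $s_i$ is represented by some fixed word over $\Sigma$). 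Each $\E_i$ is an exponent expression over $\Sigma$ in the variables $z_1,\dots,z_k$ (with each variable occurring twice). Then a valuation $\sigma$ satisfies $\sigma(\E) \in_G C(S)$ if and only if $\sigma$ is a $G$-solution of every $\E_i = 1$, i.e.,
\[
 \{\sigma : X_\E \to \N \mid \sigma(\E) \in_G C(S)\} \;=\; \bigcap_{i=1}^{r} \Sol_G(\E_i).
\]

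Since $G$ is knapsack semilinear, each $\Sol_G(\E_i)$ is effectively semilinear (the excerpt notes this extension from knapsack expressions to arbitrary exponent expressions follows from the effective closure properties of semilinear sets). The class of semilinear sets is effectively closed under intersection, so the displayed set is an effectively semilinear subset of $\N^{X_\E}$. This verifies that $G$ is knapsack semilinear relative to $C(S)$, and together with knapsack semilinearity of $G$ itself, relative to $\{1, C(S)\}$. Invoking Theorem~\ref{HNNsemilin} on the HNN-extension $H = \langle G,t \mid t^{-1}at = a\ (a\in C(S))\rangle$ concludes the proof.

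There is no real obstacle here: the argument is a short reduction, and the only subtlety is making sure that $\E^{-1}$ is interpreted correctly as an exponent expression with $\N$-valued variables, which is handled by replacing each base $u_j$ with its inverse $u_j^{-1}$ so that the exponents remain nonnegative. Everything else is an application of the already-established effective closure of semilinear sets under intersection and the hypothesis that $G$ is knapsack semilinear.
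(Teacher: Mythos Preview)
Your proposal is correct and follows essentially the same argument as the paper: reduce to Theorem~\ref{HNNsemilin} by observing that $\sigma(\E)\in_G C(S)$ is equivalent to the conjunction of the commutator identities $\E s_i \E^{-1} s_i^{-1} =_G 1$, each of which is an exponent equation with semilinear solution set, and then take the finite intersection. The paper's proof is slightly terser but identical in substance.
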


\begin{proof}
We have to show that $G$ is also knapsack semilinear relative to $C(S)$.
Let $e = e(x_1,\ldots,x_n)$ be a knapsack expression. Then $e \in_G C(S)$ is equivalent to $\bigwedge_{a \in S} e a =_G a e$.
Note that $e a =_G a e$ is equivalent to $e a e^{-1} a^{-1} =_G1$ and $e a e^{-1} a^{-1}$ is an exponent expression.
Since $G$ is knapsack semilinear and semilinear sets are closed under finite intersections, the set of solutions of $\bigwedge_{a \in S} e a = a e$
is semilinear.
\end{proof}

\begin{rem} 
It is straightforward to generalize Theorem~\ref{HNNsemilin} to a multiple HNN-extension
$$
H=\langle G, t_1, \ldots, t_n \mid t_i^{-1}at_i=a~(a\in A_i, 1 \leq i \leq n)\rangle .
$$
If $G$ is 
knapsack semilinear relative to $\{1,A_1, \ldots, A_n\}$ then $H$ is knapsack semilinear.
\end{rem}

\section{Quasi-convex subgroups of hyperbolic groups}

In this section we show that hyperbolic groups are knapsack semilinear relative to quasiconvex subgroups.
We start with the definition hyperbolic groups.

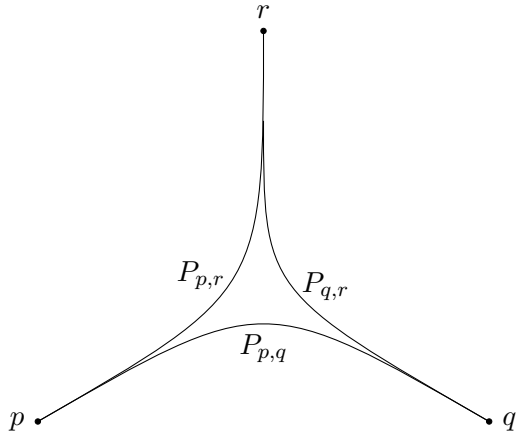
\begin{figure}[t]
 \centering{
 \scalebox{1}{
\begin{tikzpicture}
\tikzstyle{small} = [circle,draw=black,fill=black,inner sep=.25mm]
\node (p) at (0,0) [small, label=left:$p$] {};
\node (q) at (6,0) [small, label=right:$q$] {};
\node (r) at (3,5.19615) [small, label=above:$r$] {};
\draw (0,0) .. controls (3,1.73205)  .. node[pos=.5,below=0mm] {$P_{p,q}$} (6,0);
\draw (0,0) .. controls (3,1.73205)  .. node[pos=.5,above=0mm, left=0mm] {$P_{p,r}$} (3,5.19615);
\draw (3,4) .. controls (3,1.73205)  .. node[pos=.5,above=0mm, right=0mm] {$P_{q,r}$} (6,0);
\end{tikzpicture}
}}
\caption{\label{fig-geo-tri}The shape of a geodesic triangle in a hyperbolic group}
\end{figure}

\subsection{Cayley-graphs and hyperbolic groups} \label{sec-hyp}

Let $G$ be a finitely generated group with the finite symmetric generating set $\Sigma$ and let $h : \Sigma^* \to G$
be the evaluation morphism.
The  {\em Cayley-graph} of $G$ (with respect to $\Sigma$) is the graph $\Gamma = \Gamma(G)$ with node set
$G$ and all edges $(g,ga)$ for $g \in G$ and $a \in \Sigma$. We view $\Gamma$ as a geodesic metric space,
where every edge $(g,ga)$ is identified with a unit-length interval. 
It is convenient to label the directed edge
from $g$ to $ga$ with the generator $a$. Note that since $\Sigma$ is symmetric,
there is also an edge from $ga$ to $g$ labelled with $a^{-1}$.
Therefore one can view $\Gamma$ as an undirected graph.
The distance between two points $p,q$ is denoted with $d_\Gamma(p,q)$.
For $g \in G$ let $|g| = d_\Gamma(1,g)$. For $\kappa \ge 0$ and $g \in G$ 
let $\mathcal{B}_\kappa(g) = \{ h \in G \mid d_\Gamma(g,h) \le \kappa\}$ be the ball of radius $\kappa$ around $g$.

Paths can be defined in a very general way for metric spaces, but we only need paths
that are induced by words over $\Sigma$.
Given a word $w = a_1 a_2 \cdots a_n$ (with $a_i \in \Sigma$), one obtains a unique path $P[w] : [0,n] \to \Gamma$,
which is a continuous mapping from the real interval $[0,n]$ to $\Gamma$.
It maps the subinterval $[i,i+1] \subseteq [0,n]$  isometrically 
onto the edge $(h(a_1 \cdots a_i), h(a_1\cdots a_{i+1}))$ of $\Gamma$.
The path $P[w]$ starts in $1$ and ends in $h(w)$ (the group element represented by $w$).
We also say that $P[w]$ is the unique path that starts in $1$ and is labelled with the word $w$.
More generally, for $g \in G$ we denote
with $g \cdot P[w]$ the path that starts in $g$  and is labelled with $w$.
When writing $u \cdot P[w]$ for a word $u \in \Sigma^*$, we mean the path
$h(u) \cdot P[w]$.
A path  $P : [0,n] \to \Gamma$ of the above form is {\em geodesic} if $d_\Gamma(P(0),P(n)) = n$ and it is
a $(\lambda,\epsilon)$-{\em quasigeodesic} if for all points
$p = P(a)$ and $q = P(b)$ we have $|a-b| \leq \lambda \cdot d_\Gamma(p,q) + \varepsilon$.

A word $w \in \Sigma^*$ is geodesic if the path $P[w]$ is geodesic, which means
that there is no shorter word representing the same group element from $G$.
Similarly, we define the notion of 
$(\lambda,\epsilon)$-quasigeodesic words.
A set (or language) of words $L \subseteq \Sigma^*$ is called geodesic (resp., $(\lambda,\epsilon)$-quasigeodesic), if every $w \in L$ is geodesic
(resp., $(\lambda,\epsilon)$-quasigeodesic).

A {\em geodesic triangle} consists of three points $p,q,r \in G$ and geodesic paths $P_1 = P_{p,q}$, $P_2 = P_{p,r}$, $P_3 = P_{q,r}$
(the three sides of the triangle),
where $P_{x,y}$ is a geodesic path from $x$ to $y$. We call a geodesic triangle {\em $\delta$-slim}
for $\delta \geq 0$, if for all $i \in \{1,2,3\}$, every point on $P_i$ has distance at most 
$\delta$ from a point on $P_j \cup P_k$, where $\{j,k\} = \{1,2,3\} \setminus \{i\}$.
The group $G$ is called  {\em $\delta$-hyperbolic}, if  every geodesic triangle is 
$\delta$-slim. Finally, $G$ is  hyperbolic, if it is  $\delta$-hyperbolic
for some $\delta \geq 0$. Figure~\ref{fig-geo-tri} shows the shape of a geodesic triangle in a hyperbolic group.
Finitely generated free groups are for instance $0$-hyperbolic with respect to a free finite generating set.
The property of being hyperbolic is independent of the chosen generating set $\Sigma$. 
The word problem for every hyperbolic group can be decided in real time \cite{Hol00}.

\subsection{Asynchronous biautomatic structures}

Let $G$ be a f.g.~group with the finite symmetric generating set $\Sigma$ and let 
$h : \Sigma^* \to G$ be the evaluation morphism. 
An {\em asynchronous biautomatic structure} for $G$ consists of a regular language $L \subseteq \Sigma^*$ 
such that the following holds; see also \cite{EpsCHLPT92,NeuSha94}:
\begin{itemize}
\item $G = h(L)$,
\item the relation $\{ (u,v) \in L \times L \mid u =_G v \}$ is rational, and
\item for every generator $a \in \Sigma$ the relations 
\[
\{ (u,v) \in L \times L \mid ua =_G v \} \text{ and }
\{ (u,v) \in L \times L \mid au =_G v \}
\]
are rational.
\end{itemize}
If in the last point it is only required that the relation $\{ (u,v) \in L \times L \mid ua =_G v  \}$ is rational,
then $L$ is called an asynchronous automatic structure for $G$.
A f.g.~group $G$ is called asynchronously (bi)automatic if it has an asynchronous (bi)automatic structure.
We need the following lemma.

\begin{lem}  \label{lemma-asynchronous-biautomatic}
Let $L$ be an asynchronous biautomatic structure for $G$, let $L_1$ and $L_2$  be regular subsets of $L$ and
let $v_1, v_2 \in \Sigma^*$. Then the relation
$$\{  (u_1,u_2) \in L_1 \times L_2 \mid v_1 u_1 =_G u_2 v_2 \}
$$
is rational.
Moreover, a finite state transducer for this relation can be effectively computed from the words $v_1, v_2$ and 
finite automata for $L_1$ and $L_2$.
\end{lem}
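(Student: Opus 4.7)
The plan is to express the desired relation as a short composition of a few rational relations coming directly from the asynchronous biautomatic structure, and then to restrict its domain and range by intersecting with a recognizable relation.

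First, for any word $v \in \Sigma^*$, I will show that the relations
$$S_v = \{(u,u') \in L \times L \mid v u =_G u'\} \quad \text{and} \quad R_v = \{(u,u') \in L \times L \mid u v =_G u'\}$$
are rational and can be constructed effectively from $v$ and the transducers for the biautomatic structure of $G$. For a single generator $a \in \Sigma$ this is exactly part of the definition of an asynchronous biautomatic structure. For a word $v = b_1 b_2 \cdots b_n$ one has $S_v = S_{b_n} \circ S_{b_{n-1}} \circ \cdots \circ S_{b_1}$ and $R_v = R_{b_1} \circ R_{b_2} \circ \cdots \circ R_{b_n}$, so effectivity follows from the standard effective closure of rational relations under composition.

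Second, let $E = \{(u,u') \in L \times L \mid u =_G u'\}$, which is rational by the definition of the biautomatic structure. Using closure of rational relations under inverse and composition, I would verify the identity
$$\{(u_1,u_2) \in L \times L \mid v_1 u_1 =_G u_2 v_2\} \;=\; S_{v_1} \circ E \circ R_{v_2}^{-1}.$$
The inclusion from right to left is immediate by chasing the defining conditions of the three factors; the reverse inclusion uses that $h(L) = G$, so whenever $v_1 u_1 =_G u_2 v_2$ there exists some $w \in L$ with $w =_G v_1 u_1 =_G u_2 v_2$ witnessing both halves of the composition. This exhibits the target relation (before the restriction to $L_1$ and $L_2$) as an effectively constructible rational relation.

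Finally, to restrict the relation to $L_1 \times L_2$ I will invoke the standard fact that rational relations are effectively closed under intersection with \emph{recognizable} relations, and that $L_1 \times L_2$ is recognizable because $L_1$ and $L_2$ are regular. Concretely, a transducer for the intersection is obtained by a product construction of the transducer produced in the previous step with finite automata for $L_1$ (on the input tape) and $L_2$ (on the output tape). The main point one has to be careful about is precisely this last step: rational relations are \emph{not} closed under arbitrary intersection, so the restriction to $L_1 \times L_2$ must be carried out via the rational-meets-recognizable closure rather than by intersecting two rational relations of the form $L_1 \times \Sigma^*$ and $\Sigma^* \times L_2$ naively.
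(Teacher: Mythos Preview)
Your proof is correct and follows essentially the same approach as the paper: both arguments reduce to the single-letter relations from the biautomatic structure via closure of rational relations under composition, and then restrict to $L_1 \times L_2$ by a product construction (which the paper phrases as ``simulate the automaton for $L_i$ on the $i$-th tape'' and you phrase as intersection with a recognizable relation). The only cosmetic difference is that you first build $S_{v_1}$ and $R_{v_2}$ separately and then splice them with $E$, whereas the paper runs a single induction on $|v_1|+|v_2|$; unfolding the paper's induction yields exactly your composition $S_{v_1}\circ E\circ R_{v_2}^{-1}$ (in fact $E$ is redundant here, since $S_{v_1}\circ R_{v_2}^{-1}$ already suffices).
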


\begin{proof}
It suffices to show that the relation $R := \{  (u_1,u_2) \in L \times L \mid v_1 u_1 =_G u_2 v_2 \}$
is rational. The corresponding finite state transducer can in addition simulate the automaton for $L_1$ (resp., $L_2$) 
on the first (resp., second) tape. Rationality of the relation 
$R$ can be shown by induction on $|v_1|+|v_2|$.
The case $v_1 = v_2 = \varepsilon$ is clear. Assume w.l.o.g.~that $v_1 \neq \varepsilon$ and let $v_1 = v'_1 a$ 
with $a \in \Sigma$. By induction, the relation 
$R_1 = \{  (u'_1,u_2) \in L \times L \mid v'_1 u'_1 =_G u_2 v_2 \}$
is rational. Moreover, the relation 
$R_2 = \{  (u_1,u'_1) \in L \times L \mid a u_1 =_G u'_1 \}$
is rational as well. Finally, we have
$R = R_2 \circ R_1$,
where $\circ$ is relational composition.
The lemma follows since the class of rational relations is closed under relational composition \cite{SiTe00}. 
\end{proof}
We also need the following result from \cite{HoltRees03}:

\begin{lem}
Let $G$ be a hyperbolic group and let $\Sigma$ be a finite symmetric generating set for $G$.
Let $\lambda$ and $\epsilon$ be fixed constants. Then the set of all $(\lambda,\epsilon)$-quasigeodesic
words over the alphabet $\Sigma$ is an asynchronous biautomatic structure for $G$.
\end{lem}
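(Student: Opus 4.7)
The plan is to verify, in turn, each of the four clauses in the definition of an asynchronous biautomatic structure for the language $L=L(\lambda,\epsilon)$ of all $(\lambda,\epsilon)$-quasigeodesic words over $\Sigma$. Surjectivity $h(L)=G$ is immediate, since every element $g\in G$ is represented by a geodesic word, and every geodesic word is $(1,0)$-quasigeodesic, hence in $L$. The two non-trivial tasks are the regularity of $L$ and the rationality of the equality and one-letter multiplier relations; both rest on classical geometric consequences of $\delta$-hyperbolicity, namely the Morse lemma (stability of quasigeodesics) and a local-to-global principle.

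For regularity, I would invoke the local-to-global principle for quasigeodesics in a $\delta$-hyperbolic space: there exist a constant $N=N(\lambda,\epsilon,\delta)$ and slightly weakened parameters $(\lambda',\epsilon')$ such that $w\in\Sigma^*$ is $(\lambda,\epsilon)$-quasigeodesic if and only if every factor of $w$ of length at most $N$ is $(\lambda',\epsilon')$-quasigeodesic. This is a finite-state condition: a finite automaton whose states record the last $N$ symbols read can check, at each step, whether the newly-completed length-$N$ window satisfies the local quasigeodesic inequality (which depends only on the word metric restricted to a ball of radius $N$ and is therefore decidable once and for all). Thus $L$ is regular, and one even gets an effective construction of a finite automaton from $\lambda,\epsilon,\delta$ and $\Sigma$.

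For rationality of the multiplier and equality relations, the key geometric input is the asynchronous fellow traveler property: there is a constant $K=K(\lambda,\epsilon,\delta)$ such that whenever $u,v\in L$ satisfy $d_\Gamma(h(u),h(v))\le 1$, the paths $P[u]$ and $P[v]$ admit reparametrisations that stay within distance $K$ of each other. This is extracted by applying the Morse lemma to both $u$ and $v$ (each stays in a bounded Hausdorff neighbourhood of a geodesic joining their $1$-close endpoints) and then using $\delta$-slimness of the relevant quadrilateral to match progress between the two quasigeodesics. Given this, I build a finite state transducer whose states record the offset $h(p)^{-1}h(q)$, where $p,q$ are the prefixes of $u,v$ read so far; by the fellow traveler property this offset lies in the finite ball $\mathcal{B}_{K+1}(1)$. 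The transducer nondeterministically chooses which tape to advance, updates the offset using the generator read, and accepts when both tapes are exhausted and the final offset is $1$ (for equality) or the given generator $a$ (for the right multiplier $ua=_Gv$). The left multiplier relation $au=_Gv$ is handled symmetrically, using the dual offset $h(p)h(q)^{-1}$ and the same fellow traveler bound applied on the left.

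The main obstacle is establishing the asynchronous fellow traveler property with a constant depending only on $\lambda,\epsilon,\delta$, since this is where genuine hyperbolic geometry enters: one must carefully use $\delta$-slimness of triangles, the Morse lemma, and a monotone reparametrisation argument to synchronise the two quasigeodesics. Once this, together with the local-to-global principle, is in hand, the construction of the automaton for $L$ and the transducers for the rational relations is standard finite-state bookkeeping.
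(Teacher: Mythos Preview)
The paper does not prove this lemma from first principles: it cites Holt--Rees for the statement that the set of $(\lambda,\epsilon)$-quasigeodesic words is an asynchronous \emph{automatic} structure, and then upgrades to \emph{bi}automatic by the one-line observation that $L$ is closed under formal inversion ($w\in L$ iff $w^{-1}\in L$), so that the left-multiplier relation $au=_G v$ reduces to the right-multiplier relation $u^{-1}a^{-1}=_G v^{-1}$ on $L$. Your proposal instead attempts to reprove the Holt--Rees result from scratch and builds a separate transducer for the left multiplier. Your fellow-traveller construction of the equality and multiplier transducers is the standard one and is fine; the paper's inversion trick is simply a quicker way to get the left multiplier once the right one is known.

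There is, however, a genuine gap in your regularity argument. The local-to-global principle for quasigeodesics in a $\delta$-hyperbolic space does \emph{not} yield an ``if and only if'' of the form you state. If every length-$\le N$ factor of $w$ is $(\lambda',\epsilon')$-quasigeodesic, the conclusion is only that $w$ is $(\lambda'',\epsilon'')$-quasigeodesic for strictly \emph{worse} constants $\lambda''>\lambda'$, $\epsilon''>\epsilon'$; in the other direction, factors of a $(\lambda,\epsilon)$-quasigeodesic word are $(\lambda,\epsilon)$-quasigeodesic but not $(\lambda',\epsilon')$-quasigeodesic for any tighter $(\lambda',\epsilon')$. Hence no choice of $(\lambda',\epsilon')$ makes both implications hold simultaneously, and your finite-window automaton recognises a regular language $L'$ with $L(\lambda,\epsilon)\subseteq L'\subseteq L(\lambda'',\epsilon'')$, not $L(\lambda,\epsilon)$ itself. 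The Holt--Rees proof of regularity of $L(\lambda,\epsilon)$ proceeds by a different route (via the Morse lemma and asynchronous fellow-travelling with geodesic representatives), not by a purely local windowing criterion.
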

In \cite{HoltRees03} it is only stated that the set of all $(\lambda,\epsilon)$-quasigeodesic words is an 
asynchronous automatic structure for $G$. But since for every $(\lambda,\epsilon)$-quasigeodesic 
word $w \in \Sigma^*$ also $w^{-1}$ is $(\lambda,\epsilon)$-quasigeodesic, it follows easily that the 
set of all $(\lambda,\epsilon)$-quasigeodesic words is an asynchronous biautomatic structure for $G$.
With Lemma~\ref{lemma-asynchronous-biautomatic} we obtain the following lemma.

\begin{lem} \label{lemma-rat-rel}
Let $G$ be a hyperbolic group with the finite symmetric generating set $\Sigma$ and let $\lambda$ and $\epsilon$ be fixed constants. 
Assume that $L_1, L_2 \subseteq \Sigma^*$ are $(\lambda,\epsilon)$-quasigeodesic
regular languages and $v_1, v_2 \in \Sigma^*$.
Then the relation
$$\{  (u_1,u_2) \in L_1 \times L_2 \mid v_1 u_1 =_G u_2 v_2 \}$$ is rational.
Moreover, a finite state transducer for this relation can be effectively computed from the words $v_1, v_2$ and 
finite automata for $L_1$ and $L_2$.
\end{lem}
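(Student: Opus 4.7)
The proof is a direct combination of the two preceding results. The plan is to take the set of all $(\lambda,\epsilon)$-quasigeodesic words as our asynchronous biautomatic structure, observe that $L_1$ and $L_2$ sit inside it as regular sublanguages, and then invoke Lemma~\ref{lemma-asynchronous-biautomatic}.

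More precisely, I would first let $L \subseteq \Sigma^*$ denote the set of all $(\lambda,\epsilon)$-quasigeodesic words over $\Sigma$. By the lemma of Holt and Rees stated just above, $L$ is an asynchronous biautomatic structure for the hyperbolic group $G$. By hypothesis, $L_1$ and $L_2$ are regular subsets of $\Sigma^*$ each consisting of $(\lambda,\epsilon)$-quasigeodesic words, so $L_1, L_2 \subseteq L$. Thus the hypotheses of Lemma~\ref{lemma-asynchronous-biautomatic} are satisfied with this choice of $L$ and with the given $v_1, v_2$.

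Applying Lemma~\ref{lemma-asynchronous-biautomatic} directly yields that the relation
\[ \{(u_1,u_2) \in L_1 \times L_2 \mid v_1 u_1 =_G u_2 v_2\} \]
is rational, and moreover the proof of that lemma (which is constructive, proceeding by induction on $|v_1|+|v_2|$ via relational composition) gives an effective procedure to build the transducer from $v_1, v_2$ and the finite automata for $L_1, L_2$, together with the transducers witnessing the biautomatic structure on $L$ (which are themselves obtainable from the Holt--Rees construction).

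There is essentially no obstacle here beyond checking that the hypotheses line up: the only thing one needs to verify is that the class of $(\lambda,\epsilon)$-quasigeodesic words is closed under the formal inversion $w \mapsto w^{-1}$ (so that one has the two-sided, i.e.\ biautomatic, structure rather than merely a one-sided automatic one), which is immediate from the symmetric definition of quasigeodesic. Everything else is transport of structure through the two cited lemmas.
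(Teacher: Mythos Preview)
Your proposal is correct and matches the paper's approach exactly: the paper simply states that the lemma follows from combining Lemma~\ref{lemma-asynchronous-biautomatic} with the Holt--Rees result, noting (as you do) that closure of the $(\lambda,\epsilon)$-quasigeodesic words under $w\mapsto w^{-1}$ upgrades the automatic structure to a biautomatic one.
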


\subsection{Parikh images in hyperbolic groups}

Let us fix a hyperbolic group $G$ with the finite symmetric generating set $\Sigma$
for the rest of the section.
We fix an arbitrary enumeration $a_1, \ldots, a_k$ of the alphabet $\Sigma$ in order to make Parikh images well-defined.
Recall that the semilinear sets are exactly the Parikh images of regular languages; see
Theorem~\ref{thm-Parikh-regular}. Together with  Lemma~\ref{lemma-rat-rel} we obtain the next result.

\begin{lem} \label{lemma-k=2-reg}
Let $G$ be a hyperbolic group with the finite symmetric generating set $\Sigma$ and let $\lambda$ and $\epsilon$ be fixed constants. 
Assume that $L_1, L_2 \subseteq \Sigma^*$ are $(\lambda,\epsilon)$-quasigeodesic
regular languages and $v_1, v_2 \in \Sigma^*$.
Then the set 
\begin{equation} \label{eq-lemma-k=2-reg}
\{  (P(u_1),P(u_2)) \in \mathbb{N}^{2k} \mid u_1 \in L_1, u_2 \in L_2, v_1 u_1 =_G u_2 v_2 \}
\end{equation}
is semilinear.
Moreover, a semilinear representation for this set can be effectively computed from the words $v_1, v_2$ and 
finite automata for $L_1$ and $L_2$.
\end{lem}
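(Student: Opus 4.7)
The plan is to combine Lemma~\ref{lemma-rat-rel} with Parikh's theorem (Theorem~\ref{thm-Parikh-regular}) via a standard encoding of a rational relation as a regular language over a doubled alphabet.

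First I would apply Lemma~\ref{lemma-rat-rel} to obtain, effectively from $v_1, v_2$ and the given automata for $L_1$ and $L_2$, a finite state transducer $\mathcal{T} = (Q, I, \delta, F)$ whose accepted relation is
\[
R(\mathcal{T}) = \{ (u_1, u_2) \in L_1 \times L_2 \mid v_1 u_1 =_G u_2 v_2 \}.
\]
Recall that by definition $\delta \subseteq (Q \times \Sigma \times \{\varepsilon\} \times Q) \cup (Q \times \{\varepsilon\} \times \Sigma \times Q)$.

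Next I would encode $\mathcal{T}$ as an ordinary finite automaton $\mathcal{A}$ over the doubled alphabet $\Sigma \cup \overline{\Sigma}$, where $\overline{\Sigma} = \{\overline{a_1}, \ldots, \overline{a_k}\}$ is a disjoint tagged copy of $\Sigma$. Concretely, replace every transition $(q, a, \varepsilon, q') \in \delta$ by $(q, a, q')$ and every transition $(q, \varepsilon, a, q') \in \delta$ by $(q, \overline{a}, q')$, while keeping $Q$, $I$, $F$ unchanged. Then for a run of $\mathcal{T}$ accepting a pair $(u_1, u_2)$ there is a corresponding run of $\mathcal{A}$ accepting a word $w \in (\Sigma \cup \overline{\Sigma})^*$ with $|w|_{a_i} = |u_1|_{a_i}$ and $|w|_{\overline{a_i}} = |u_2|_{a_i}$ for all $1 \le i \le k$, and vice versa. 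Under the canonical identification $\mathbb{N}^{\Sigma \cup \overline{\Sigma}} \cong \mathbb{N}^{2k}$ using the fixed enumerations $a_1, \ldots, a_k$ and $\overline{a_1}, \ldots, \overline{a_k}$, the Parikh image of $L(\mathcal{A})$ is therefore exactly the set in \eqref{eq-lemma-k=2-reg}.

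Finally I would invoke Theorem~\ref{thm-Parikh-regular} to conclude that $P(L(\mathcal{A}))$ is semilinear and that a semilinear representation can be computed from $\mathcal{A}$. Tracing the construction back, the whole pipeline (Lemma~\ref{lemma-rat-rel}, alphabet encoding, Parikh's theorem) is effective in $v_1$, $v_2$, and the automata for $L_1, L_2$, which yields the claimed effectivity.

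There is essentially no deep obstacle here: both non-trivial ingredients (rationality of the relation via asynchronous biautomaticity, and the Parikh image of a regular language being semilinear) are already available. The only care needed is in the alphabet-doubling encoding, to ensure that the bijection between transducer runs and automaton runs preserves the relevant letter counts so that the Parikh image of $L(\mathcal{A})$ corresponds componentwise to the pair of Parikh images $(P(u_1), P(u_2))$ rather than to some entangled combination of them.
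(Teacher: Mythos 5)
Your proposal is correct and follows essentially the same route as the paper: apply Lemma~\ref{lemma-rat-rel} to obtain a transducer for the relation, convert it into a finite automaton over a doubled alphabet by tagging the second-tape letters, and read off the set \eqref{eq-lemma-k=2-reg} as the Parikh image of the resulting regular language via Theorem~\ref{thm-Parikh-regular}. No gaps.
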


\begin{proof}
Let $\Sigma' = \{ a' \mid a \in \Sigma\}$ be a disjoint copy of the alphabet $\Sigma$.
By Lemma~\ref{lemma-rat-rel} there is a finite state transducer $\mathcal{T}$ for the relation
$$\{  (u_1,u_2) \in L_1 \times L_2 \mid v_1 u_1 =_G u_2 v_2 \}.
$$
From $\mathcal{T}$ we obtain a finite automaton $\mathcal{A}$ over the alphabet $\Sigma \cup \Sigma'$ by replacing every
transition $(p, a,\varepsilon, q)$ by $(p, a, q)$ and every
transition $(p, \varepsilon,a, q)$ by $(p, a', q)$.
For the alphabet $\Sigma \cup \Sigma'$ we take the enumeration $a_1, \ldots, a_k, a'_1, \ldots, a'_k$.
With this enumeration, the set \eqref{eq-lemma-k=2-reg}
is the Parikh image of the language $L(\mathcal{A})$. Hence, the lemma follows from Theorem~\ref{thm-Parikh-regular}.
\end{proof}

\subsection{The main result}
We now come to the main technical  result of this section.

\begin{thm} \label{theorem-hyp-main0}
Let $G$ be a hyperbolic group with the finite symmetric generating set $\Sigma=\{a_1,\ldots,a_k\}$.
Fix constants $\epsilon, \lambda$.
For $1\leq i\leq n$ 
let $L_i \subseteq \Sigma^*$ be a regular $(\lambda,\epsilon)$-quasigeodesic  language.
Then the set
$$
\{ (P(w_1), \ldots, P(w_n)) \in \mathbb{N}^{nk} \mid w_i \in L_i \text{ for } 1 \leq i \leq n, w_1 w_2 \cdots w_n =_G 1  \}
$$ 
is semilinear and a semilinear representation of this set can be computed from 
 finite automata for $L_1, \ldots, L_n$.
\end{thm}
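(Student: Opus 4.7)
My plan is to proceed by induction on $n$, with Lemma~\ref{lemma-k=2-reg} as the atomic building block and with the cases $n \le 3$ handled as base cases. For $n = 1$, I apply Lemma~\ref{lemma-k=2-reg} with $L_2 = \{\varepsilon\}$ and $v_1 = v_2 = \varepsilon$ to obtain $\{(P(w_1),0) : w_1 \in L_1, w_1 =_G 1\}$ as a semilinear set, then project out the trivial second coordinate. For $n = 2$, I rewrite $w_1 w_2 =_G 1$ as $w_1 =_G w_2^{-1}$, apply Lemma~\ref{lemma-k=2-reg} to $L_1$ and $L_2^{-1}$ (which is again regular and $(\lambda,\epsilon)$-quasigeodesic) with $v_1 = v_2 = \varepsilon$, and pull back along the linear bijection $P(w) \leftrightarrow P(w^{-1})$, which is just the coordinate permutation swapping each $a_i$-count with the corresponding $a_i^{-1}$-count.

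For the inductive step with $n \ge 4$, I introduce an auxiliary geodesic representative of a proper sub-product. Let $L_{\mathrm{geo}} \subseteq \Sigma^*$ be the (regular, $(1,0)$-quasigeodesic) language of geodesic words for $G$. Choose any $k$ with $2 \le k \le n - 2$ and observe that $w_1 \cdots w_n =_G 1$ holds iff there exists $v \in L_{\mathrm{geo}}$ such that both $w_1 \cdots w_k v^{-1} =_G 1$ (involving $k+1 \le n-1$ quasigeodesic words) and $v w_{k+1} \cdots w_n =_G 1$ (involving $n-k+1 \le n-1$ quasigeodesic words). Both sub-problems are strictly smaller, so by inductive hypothesis each has an effectively semilinear Parikh image set; intersecting these after identifying the $P(v^{-1})$-coordinate of the first with the $P(v)$-coordinate of the second (again via a coordinate permutation) and projecting out the $v$-coordinate yields the desired semilinear representation.

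The main obstacle is the $n = 3$ base case, because no such split strictly reduces $n$ there. For this I invoke the slim-triangle geometry of $\delta$-hyperbolic groups. By the Morse lemma combined with the tripod structure of geodesic triangles, any $(\lambda,\epsilon)$-quasigeodesic triangle with sides $w_1, w_2, w_3$ satisfying $w_1 w_2 w_3 =_G 1$ admits splits $w_i = \alpha_i \beta_i$ (with matched tripod lengths $|\alpha_1| = |\beta_3|$, $|\beta_1| = |\alpha_2|$, $|\beta_2| = |\alpha_3|$) together with a constant $C = C(\delta,\lambda,\epsilon)$ such that at each triangle vertex the two adjacent half-sides $C$-fellow-travel. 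The fellow-traveling offsets therefore lie in the finite ball $\mathcal{B}_C(1) \subseteq G$, and I branch over the finitely many triples of such offsets. For each branch, each of the three fellow-traveling conditions becomes a two-variable $G$-equation of the form $v_1 u_1 =_G u_2 v_2$ for fixed words $v_1, v_2$ encoding the offsets and with $u_1, u_2$ ranging over regular $(\lambda,\epsilon)$-quasigeodesic sub-languages of the $L_i$ obtained as prefixes or reversed suffixes (which remain quasigeodesic with the same constants and are regular by standard automata constructions). Each such constraint has a semilinear Parikh image set by Lemma~\ref{lemma-k=2-reg}; conjoining the three with shared split-length parameters (expressible as linear functionals on Parikh coordinates), existentially quantifying over the split positions, and unioning over all offset triples then yields the semilinear set for $n = 3$. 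Effective computability of semilinear representations is preserved throughout all three steps.
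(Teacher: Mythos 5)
Your base cases and your $n=3$ argument are essentially sound: the tripod decomposition of a $(\lambda,\epsilon)$-quasigeodesic triangle into three bigons joined by cut paths of bounded length $C(\delta,\lambda,\epsilon)$, with branching over the finitely many offset words and splitting the regular languages at automaton states, is exactly the kind of argument the paper uses (its Case~2.4). The fatal problem is the inductive step for $n\ge 4$. You reduce $w_1\cdots w_n=_G 1$ to the existence of a geodesic $v$ with $w_1\cdots w_k v^{-1}=_G 1$ and $v\,w_{k+1}\cdots w_n=_G 1$, apply the induction hypothesis to each sub-polygon separately, and then ``identify the $P(v^{-1})$-coordinate of the first with the $P(v)$-coordinate of the second.'' But the induction hypothesis only hands you sets of Parikh images, and a Parikh image does not determine a group element: in $F(a,b)$ the words $ab$ and $ba$ have equal Parikh images but are distinct. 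So the intersection you form only asserts that there exist $v$ and $v'$ with $P(v)=P(v')$ solving the two sub-problems; it does not force $v=v'$ (or even $v=_G v'$), and hence computes a strict superset of the desired set in general. The existential witness $v$ ranges over an infinite set, and semilinearity of Parikh images is too coarse an invariant to glue the two halves along it.

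This is precisely the difficulty the paper's proof is engineered to avoid, in two ways you would need to replicate. First, it proves a stronger statement in which fixed words $v_1,\ldots,v_n$ are interleaved between the $w_i$, so that auxiliary cut words can be absorbed into the instance rather than carried as extra quasigeodesic sides. Second, and crucially, it uses hyperbolicity \emph{at every inductive step}, not only at $n=3$: by the thinness of quasigeodesic $2n$-gons (\cite[Lemma~6.4]{MyNiUs14}), one of the sides, say $P_2$, is always within distance $\kappa=\xi+\xi\log(2n)$ of the union of the other sides, and a case analysis produces a cut path of length at most $2\kappa+1$ splitting the $2n$-gon into strictly smaller polygons. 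Because the cut word has bounded length, it ranges over a \emph{finite} set that one can branch over explicitly; the only data shared between the two smaller instances is then this concrete word together with an automaton state and the additivity constraint $\bar{x}_i=\bar{y}_i+\bar{z}_i$ on Parikh vectors, all of which are Presburger-expressible. To repair your proof you would have to replace the unbounded existential over $v\in L_{\mathrm{geo}}$ by such a bounded, finitely-branched cut, which in effect reconstructs the paper's argument.
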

We postpone the proof of Theorem~\ref{theorem-hyp-main0} and first derive some corollaries.

\begin{thm} \label{theorem-knapsack-semilinear-rel.A}
Let $G$ be hyperbolic and let $S \subseteq \Sigma^*$ be a regular geodesic set.
Then $G$ is knapsack semilinear relative to $h(S)$, where $h : \Sigma^* \to G$ is the evaluation morphism.
\end{thm}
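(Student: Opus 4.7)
The goal is to show, for a knapsack expression $e(x_1,\ldots,x_n) = v_0 u_1^{x_1} v_1 \cdots u_n^{x_n} v_n$ over $\Sigma$, that
$\{(c_1,\ldots,c_n) \in \N^n : \sigma(e) \in_G h(S)\}$
is effectively semilinear. My strategy is to convert the membership condition into a homogeneous equation $\sigma(e) \cdot s^{-1} =_G 1$ with $s \in S$ and then apply Theorem~\ref{theorem-hyp-main0} to a family of $(\lambda,\varepsilon)$-quasigeodesic regular languages built from the $u_i$-powers, the constant factors $v_i$, and $S^{-1}$.

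First I would preprocess each power $u_i^{x_i}$ into quasigeodesic form using the machinery from \cite{LOHREY2019}. When $h(u_i)$ has finite order $q_i$, I branch on $c_i \bmod q_i$, each residue class fixing $u_i^{c_i}$ to one of finitely many words. When $h(u_i)$ has infinite order, the fact that cyclic subgroups in hyperbolic groups are quasi-isometrically embedded (together with an effective conjugation into a ``core'') yields computable words $s_i, w_i, p_i \in \Sigma^*$ and a threshold $N_i \in \N$ such that $u_i^{c_i} =_G s_i w_i^{c_i - N_i} p_i$ for every $c_i \ge N_i$, and moreover the regular language $w_i^{\,*}$ is $(\lambda,\varepsilon)$-quasigeodesic for constants $\lambda,\varepsilon$ depending only on $G$ and $\Sigma$; the finitely many small cases $c_i < N_i$ are absorbed as additional disjuncts. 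Coalescing adjacent constant factors, the condition $\sigma(e) \in_G h(S)$ then becomes
\[
h_0\, w_1^{m_1}\, h_1\, w_2^{m_2} \cdots w_n^{m_n}\, h_n \in_G h(S), \qquad m_i = c_i - N_i,
\]
for concrete words $h_i \in \Sigma^*$ built from the $v_j, s_j, p_j$.

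Next I rewrite this as the equation $h_0 w_1^{m_1} h_1 \cdots w_n^{m_n} h_n s^{-1} =_G 1$ for some $s \in S$, and I define the $2n+2$ regular languages $L_0 = \{h_0\}$, $L_{2i-1} = w_i^{\,*}$ for $1 \le i \le n$, $L_{2i} = \{h_i\}$ for $1 \le i \le n$, and $L_{2n+1} = S^{-1}$. Each is $(\lambda',\varepsilon')$-quasigeodesic for suitable uniform constants: each singleton $\{h_i\}$ trivially (pad $\varepsilon'$ past $\max_i |h_i|$), each $w_i^{\,*}$ by the preceding step, and $S^{-1}$ geodesically because the involution $w \mapsto w^{-1}$ preserves length and inverts the evaluated group element. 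Theorem~\ref{theorem-hyp-main0} then produces an effectively semilinear description of
\[
R = \{(P(y_0),\ldots,P(y_{2n+1})) : y_j \in L_j,\ y_0 y_1 \cdots y_{2n+1} =_G 1\} \subseteq \N^{(2n+2)k}.
\]

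Finally I project $R$ onto $(m_1,\ldots,m_n)$ via Presburger-definable operations: if $w_i \neq \varepsilon$, the condition $y_{2i-1} \in w_i^{\,*}$ forces $P(y_{2i-1}) = m_i \cdot P(w_i)$, so $m_i$ is recovered as a linear function of any nonzero coordinate of $P(y_{2i-1})$; if $w_i = \varepsilon$ the factor vanishes and $m_i$ is unconstrained. Existentially quantifying over $P(y_{2n+1}) \in P(S^{-1})$ and over the (fixed) Parikh images of the constant factors, then undoing the affine shifts $c_i = m_i + N_i$ and taking the finite union over the preprocessing branches, yields an effective semilinear representation of the original set. The main technical obstacle is the first step, namely producing a single pair of quasigeodesic constants $(\lambda,\varepsilon)$ that works uniformly for all the power languages $w_i^{\,*}$ arising from the expression; once this is in place, the rest is a direct reduction to Theorem~\ref{theorem-hyp-main0} together with routine Presburger post-processing.
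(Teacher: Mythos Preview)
Your proposal is correct and follows essentially the same approach as the paper: preprocess the powers $u_i^{x_i}$ so that the relevant languages $w_i^{\,*}$ are uniformly $(\lambda,\varepsilon)$-quasigeodesic (the paper obtains this by citing \cite[Proposition~8.4]{LOHREY2019}, which packages exactly the finite/infinite-order case split you describe), then apply Theorem~\ref{theorem-hyp-main0} to the tuple of languages $(\{h_0\}, w_1^{\,*}, \{h_1\}, \ldots, w_n^{\,*}, \{h_n\}, S^{-1})$ and project. The only cosmetic difference is that the paper replaces the constant factors by geodesic representatives rather than padding~$\varepsilon$, and it phrases the preprocessing as a decomposition of $\Sol_G(ew^{-1})$ that is uniform in $w \in S$; your formulation, which preprocesses before introducing $S^{-1}$, sidesteps that uniformity argument.
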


\begin{proof}
Let $e$ be a knapsack expression over $\Sigma$.
We want to find a semilinear representation for the set 
\begin{equation} \label{eq-set1}
\{ \sigma : X_e \to \N \mid \exists w \in S : \sigma(e) =_G w \}  =
\bigcup_{w \in S} \Sol_G(ew^{-1}) .
\end{equation}
In a first step we show that we can assume that for every power $u^x$ that appears in $e$ ($x \in X_e$)
the language $u^*$ is $(\lambda,\epsilon)$-quasi\-geodesic 
for fixed constants $\lambda,\epsilon$ that only depend on the group $G$.
For this we use \cite[Proposition~8.4]{LOHREY2019}. It states that from our given knapsack expression
$e$ one can compute a finite list of knapsack expressions $e_1, \ldots, e_n$ with $X_{e_i} \subseteq X_e$ for $1 \le i \le n$,
functions $m_i, d_i : X_{e_i} \to \N$ and semilinear sets $\mathcal{F}_i \subseteq \N^{X_e \setminus X_{e_i}}$ 
such that 
\begin{equation} \label{eq-prop8.4}
\Sol_G(e) = \bigcup_{1 \le i \le n} (m_i \cdot \Sol_G(e_i) + d_i) \oplus \mathcal{F}_i .
\end{equation}
Moreover, every knapsack expression $e_i$ has the property that for every power $u^{x}$ that appears
in $e_i$, the language $u^*$ is $(\lambda,\epsilon)$-quasigeodesic for some fixed constants 
$\lambda,\epsilon$ that only depend on the group $G$.

Consider now for an arbitrary word $w \in S$ the knapsack expression $ew^{-1}$. From the construction in 
\cite[Proposition~8.4]{LOHREY2019}, it follows that 
\begin{equation} \label{eq-prop8.4'}
\Sol_G(ew^{-1}) = \bigcup_{1 \le i \le n}  (m_i \cdot \Sol_G(e_iw^{-1}) + d_i) \oplus \mathcal{F}_i .
\end{equation}
The reason is that in the proof of \cite[Proposition~8.4]{LOHREY2019},
every $e_i$ is computed from $e$ by replacing every power $u^{x}$ in $e$ either by a fixed power $u^k$ for some 
$k \in \mathbb{N}$ or by $v {\tilde{u}}^{x} v'$ for a $(\lambda,\epsilon)$-quasigeodesic  word $\tilde{u}$
and words $v$ and $v'$. The same replacements are also made for $ew^{-1}$.
Therefore, the set \eqref{eq-set1} is equal to
\begin{eqnarray*}
\bigcup_{w \in S} \Sol_G(ew^{-1})    
& = & \bigcup_{w \in S} \bigcup_{1 \le i \le n}  (m_i \cdot \Sol_G(e_iw^{-1}) + d_i) \oplus \mathcal{F}_i \\
& = & \bigcup_{1 \le i \le n}  \left( m_i \cdot  \left( \bigcup_{w \in S} \Sol_G(e_iw^{-1}) \right) + d_i \right) \oplus \mathcal{F}_i .
\end{eqnarray*}
The closure properties of semilinear sets imply that the set \eqref{eq-set1}  is semilinear provided 
 the set 
$$
\bigcup_{w \in S} \Sol_G(e_iw^{-1}) = \{ \sigma  : X_{e_i} \to \N \mid \exists w \in S : \sigma(e_i) =_G w\}
$$
 is semilinear for all $1 \le i \le n$.

This shows that it suffices to find a semilinear representation of \eqref{eq-set1} for a knapsack expression
$e = v_0 u_1^{x_1} v_1  u_1^{x_1} \cdots  u_n^{x_n} v_n$ where
all $u_i$ have the property that $u_i^*$ is a regular $(\lambda,\epsilon)$-quasigeodesic language.
Clearly, we can also assume that every $u_i$ is non-empty and every $v_i$ is geodesic. Moreover, since $S$ is regular and geodesic, it is easy
to see that also $S^{-1}$ is regular and geodesic.

Let $(L_1, \ldots, L_m)$ be the tuple of languages $(\{v_0\}, u_1^*,\{v_1\}, \ldots, u_n^*,\{v_n\}, S^{-1})$ (with $m = 2(n+1)$).
All these languages are regular and $(\lambda,\epsilon)$-quasigeodesic.
By Theorem~\ref{theorem-hyp-main0}, the set 
$$
\{ (P(w_1), \ldots, P(w_{m})) \in \mathbb{N}^{m k} \mid  w_i \in L_i \text{ for } 1 \leq i \leq m,  w_1 \cdots w_m =_G 1 \}
$$
is semilinear and a semilinear representation of this set can be computed.
Applying a projection yields a  semilinear representation of the set
\begin{align*}
\{(P(w_1), \ldots, P(w_n)) \in \mathbb{N}^{nk} \mid  \ & w_i \in u_i^* \text{ for } 1 \leq i \leq n,  \\
& \exists w  \in S : v_0 w_1 v_1 \cdots w_n v_n =_G w \} .
\end{align*}
Choose for every $u_i$ a symbol $a_{j_i} \in \Sigma$ such that
$\ell_i := |u_i|_{a_{j_i}} > 0$ (recall that $u_i \neq \varepsilon$). Then we project every $P(w_i)$ in the above set to the $j_i$-th coordinate.
The resulting projection is 
$$
\{(\ell_1 \cdot x_1, \ldots, \ell_n \cdot x_n) \in \mathbb{N}^n \mid \;  \exists w \in S :
v_0 u_1^{x_1} v_1 \cdots u_n^{x_n}  v_n =_G w \} .
$$
The semilinearity of this set easily implies the semilinearity of the set 
$$
\{(x_1, \ldots, x_n) \in \mathbb{N}^n \mid \;  \exists w \in S :
v_0 u_1^{x_1} v_1 \cdots u_n^{x_n} v_n =_G w \} .
$$
This concludes the proof.
\end{proof}
A subset $A \subseteq G$ is called {\em quasiconvex} if there exists a constant $\kappa \geq 0$ such that
every geodesic path from $1$ to some $g \in A$ is contained
in $\bigcup_{h \in A} B_\kappa(h)$.
The following result can be found in \cite{GeSh91} ($h$ denotes the evaluation morphism):

\begin{lem} \label{lemma-gersten-short}
A subset $A \subseteq G$ is quasiconvex if and only if the language of all geodesic words in $h^{-1}(A)$ is regular.
\end{lem}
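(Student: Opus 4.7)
My plan is to prove the two implications separately.

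First I would handle the easier reverse direction by a pigeonhole argument. Assume the language $L := \{w \in \Sigma^* : w \text{ is geodesic and } h(w) \in A\}$ is accepted by a finite automaton with $n$ states. Given $g \in A$ and any geodesic word $w = a_1 \cdots a_m \in L$ with $h(w) = g$, every prefix $w_i = a_1 \cdots a_i$ sends the automaton to a state from which an accepting state is reachable; a shortest such path of length at most $n$ produces a completion $v_i$ with $w_i v_i \in L$. Then $h(w_i v_i) \in A$ and $d_\Gamma(h(w_i), A) \leq |v_i| \leq n$, so $A$ is $n$-quasiconvex. Since the prefixes of a geodesic word $w$ trace out the geodesic from $1$ to $h(w)$ in $\Gamma$, this shows every such geodesic stays in the $n$-neighborhood of $A$.

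For the forward direction I would build a finite automaton $\mathcal{B}$ for $L$, following the Gersten-Short approach of \cite{GeSh91}. The construction rests on two features of hyperbolic groups: the language of all geodesic words over $\Sigma$ is regular (equivalently, $G$ has only finitely many cone types), and geodesics fellow-travel uniformly. States of $\mathcal{B}$ are pairs $(T, g)$ where $T$ is a cone type of $G$ and $g \in \mathcal{B}_\kappa(1)$, with the intended invariant that, after reading a prefix $u$, the state $(T, g)$ means $u$ is geodesic, $h(u)$ has cone type $T$, and $h(u) g \in A$. Initial states are $(T_0, g)$ with $T_0$ the cone type of $1$ and $g \in A \cap \mathcal{B}_\kappa(1)$, and accept states are those with $g = 1$. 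Quasiconvexity guarantees that a witness $g \in \mathcal{B}_\kappa(1)$ exists at every intermediate position along any geodesic to $A$, so every word of $L$ can be traced by some run of $\mathcal{B}$.

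The hard part will be showing that the transitions of $\mathcal{B}$ are finite-state. Updating cone types $T \to T'$ is classical and local in the hyperbolic setting. Updating the witness $g \to g'$ requires that whenever $h(u) g \in A$, we also have $h(u a) g' \in A$; for a subgroup this reduces to the bounded-length membership check $g^{-1} a g' \in A$, which is finitely many fixed conditions since $|g^{-1} a g'| \leq 2\kappa + 1$. For a general subset one enriches the states with a bounded-radius snapshot of $A$ around the current position, of which only finitely many patterns arise by the finiteness of cone types together with hyperbolicity. Once the transitions are in place, correctness of $\mathcal{B}$ follows by induction on the input length using the invariant, yielding regularity of $L$.
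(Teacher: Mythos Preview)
The paper does not prove this lemma; it cites the result from \cite{GeSh91}, so there is no in-paper argument to compare against. Your reverse direction (regular $\Rightarrow$ quasiconvex) is correct for arbitrary subsets, and your forward direction is correct for subgroups: the reduction of the transition condition to the finitely many membership tests $g^{-1} a g' \in A$ genuinely uses that $A$ is closed under products and inverses, and the cone-type component handles geodesicity. The subgroup case is the only one the paper actually needs (Theorem~\ref{theorem-quasiconvex} applies the lemma to a quasiconvex subgroup $H$).

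Your closing remark for general subsets, however, does not work, and in fact the forward implication \emph{fails} for arbitrary subsets as the lemma is literally stated. Take $G = \mathbb{Z} = \langle a \rangle$ and $A = \{n \geq 0 : n \text{ is not a power of } 2\}$. Every non-negative integer lies within distance $1$ of some element of $A$, so $A$ is quasiconvex in the paper's sense; but $\{a^n : n \in A\}$ is not regular, since $A \subseteq \mathbb{N}$ is not eventually periodic. Your ``bounded-radius snapshot'' idea cannot rescue this: the set $\{g \in \mathcal{B}_K(1) : h(u)\,g \in A\}$ at position $h(u)$ does not determine the corresponding set at $h(ua)$, because computing the latter needs radius-$(K{+}1)$ information and the required radius cascades. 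So either read the lemma as a statement about subgroups (matching both the title of \cite{GeSh91} and the paper's use of it), or flag that the lemma as written for subsets needs correction.
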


\begin{thm} \label{theorem-quasiconvex}
Let $G$ be hyperbolic and let $H$ be a quasiconvex subgroup of $G$. Then $G$ is knapsack semilinear relative to $H$.
\end{thm}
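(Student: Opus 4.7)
The plan is to derive this theorem as an essentially immediate consequence of Theorem~\ref{theorem-knapsack-semilinear-rel.A} combined with Lemma~\ref{lemma-gersten-short}. The strategy is to exhibit $H$ as the image under the evaluation morphism $h : \Sigma^* \to G$ of a regular geodesic language, which is exactly the hypothesis needed to invoke Theorem~\ref{theorem-knapsack-semilinear-rel.A}.

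Concretely, let $S$ be the set of all geodesic words in $h^{-1}(H)$. Since $H$ is quasiconvex, Lemma~\ref{lemma-gersten-short} tells us that $S$ is a regular subset of $\Sigma^*$, and by its very definition $S$ is geodesic. Moreover, $h(S) = H$: every group element $g \in H$ is represented by some word over $\Sigma$, and at least one representative of $g$ of minimal length, i.e.\ a geodesic word for $g$, lies in $h^{-1}(H)$ and hence in $S$. Conversely, $S \subseteq h^{-1}(H)$ gives $h(S) \subseteq H$.

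Having produced $S$, we apply Theorem~\ref{theorem-knapsack-semilinear-rel.A}: since $G$ is hyperbolic and $S$ is a regular geodesic set, $G$ is knapsack semilinear relative to $h(S) = H$, which is exactly the conclusion. Effectivity of the construction also carries through: from a finite-automaton description of $S$ (obtainable via Lemma~\ref{lemma-gersten-short}) and a given knapsack expression, Theorem~\ref{theorem-knapsack-semilinear-rel.A} yields an effective semilinear representation of the relevant solution set.

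There is essentially no genuine obstacle in this final step; all the technical work sits in Theorem~\ref{theorem-hyp-main0} and Theorem~\ref{theorem-knapsack-semilinear-rel.A}. The only point that merits a brief verification is the equality $h(S) = H$, which, as noted above, follows because every element of a finitely generated group admits a geodesic representative. Thus the proof is just the two-line reduction described.
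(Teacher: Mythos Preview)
Your proposal is correct and follows essentially the same approach as the paper: let $S$ be the set of geodesic words in $h^{-1}(H)$, observe via Lemma~\ref{lemma-gersten-short} that $S$ is a regular geodesic language with $h(S)=H$, and then apply Theorem~\ref{theorem-knapsack-semilinear-rel.A}. The only difference is that you spell out the verification of $h(S)=H$, which the paper leaves implicit.
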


\begin{proof}
Fix a finite symmetric generating set $\Sigma$ for $G$ and let $h$ be the evaluation morphism.
By Lemma~\ref{lemma-gersten-short} the set of all geodesic words in $h^{-1}(H)$ is a geodesic regular
language. By Theorem~\ref{theorem-knapsack-semilinear-rel.A}, $G$ is 
knapsack semilinear relative to $H$.
\end{proof}
It is known that every finitely generated free group $F$ is locally quasiconvex, which means that every
finitely generated subgroup of $F$ is quasiconvex.

\begin{cor}
Let $G$ be a finitely generated free group and let $H$ be a finitely generated subgroup of $G$. Then $G$ is knapsack semilinear relative to $H$.
\end{cor}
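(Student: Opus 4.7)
The corollary is essentially an immediate consequence of Theorem~\ref{theorem-quasiconvex} combined with two well-known facts about free groups that have just been invoked in the preceding paragraph. My plan is to verify that the hypotheses of Theorem~\ref{theorem-quasiconvex} are satisfied and then cite it directly.

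First, I would observe that every finitely generated free group $G$ is hyperbolic: with respect to a free symmetric generating set, the Cayley graph is a tree, and so every geodesic triangle is degenerate (in fact $G$ is $0$-hyperbolic). Hyperbolicity is independent of the choice of finite symmetric generating set, so $G$ is hyperbolic in the sense of Section~5.1 regardless of how we present it.

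Next, I would appeal to the fact, stated in the sentence preceding the corollary, that finitely generated free groups are locally quasiconvex, so the finitely generated subgroup $H \leq G$ is quasiconvex in the sense of the definition just before Lemma~\ref{lemma-gersten-short}. (If one wanted a self-contained argument, one could invoke the classical fact that $H$ is a free factor of a finite-index subgroup, or use Stallings foldings to exhibit a finite automaton accepting the geodesic words representing elements of $H$; but since local quasiconvexity of free groups is standard and already cited, no further work is needed.)

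Having verified both hypotheses, the corollary follows immediately from Theorem~\ref{theorem-quasiconvex}: since $G$ is hyperbolic and $H$ is a quasiconvex subgroup, $G$ is knapsack semilinear relative to $H$. There is no real obstacle here; the corollary is a direct specialization of the main result of the section to the free-group case, included to highlight an important family of examples.
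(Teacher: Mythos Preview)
Your proposal is correct and follows exactly the paper's approach: the corollary is stated without proof in the paper, being an immediate consequence of Theorem~\ref{theorem-quasiconvex} together with the preceding sentence that finitely generated free groups are locally quasiconvex. Your write-up simply makes explicit what the paper leaves implicit.
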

This corollary can actually be generalized. Schupp proved that a group $G$, which is virtually an orientable surface group of genus at least two
or virtually a Coxeter group satisfying a certain reduction hypothesis, is locally quasiconvex \cite[Theorem~IV]{Schupp03}. Since these groups are 
hyperbolic, it follows that the groups considered by Schupp are knapsack semilinear relative to any finitely generated subgroup.

Theorems~\ref{HNNsemilin} and \ref{theorem-quasiconvex} yield the following result:
\begin{cor}
Let $H=\langle G, t \mid t^{-1}at=a~(a\in A)\rangle$ be an HNN-extension where $G$ is hyperbolic and 
$A \leq G$ is a quasiconvex subgroup of $G$. 
Then $H$ is knapsack semilinear.
\end{cor}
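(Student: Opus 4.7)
The plan is simply to verify the hypotheses of Theorem~\ref{HNNsemilin} for the HNN-extension $H$. That theorem requires two things of the base group $G$: it must be knapsack semilinear, and it must be knapsack semilinear relative to the associated subgroup $A$. Once both conditions are established, the conclusion that $H$ is knapsack semilinear is immediate.

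First I would invoke the known result that every hyperbolic group is knapsack semilinear (the statement \cite[Theorem 8.1]{LOHREY2019} mentioned in the introduction), which gives knapsack semilinearity of $G$ relative to $\{1\}$. Next I would appeal to Theorem~\ref{theorem-quasiconvex} of the present section, which asserts precisely that a hyperbolic group $G$ is knapsack semilinear relative to any quasiconvex subgroup $H$; applying this with $H = A$ yields knapsack semilinearity of $G$ relative to $A$.

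Combining these two facts gives that $G$ is knapsack semilinear relative to $\{1, A\}$, which is exactly the hypothesis required by Theorem~\ref{HNNsemilin}. That theorem then delivers knapsack semilinearity of $H = \langle G, t \mid t^{-1}at = a \,(a \in A)\rangle$, and the corollary is proved.

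There is no real obstacle here, since the corollary is a direct packaging of the two main theorems of the paper (the transfer result for HNN-extensions and the relative knapsack semilinearity of hyperbolic groups with respect to quasiconvex subgroups). The only point worth stating explicitly is that the two notions of ``knapsack semilinearity relative to a set'' combine as required: knapsack semilinearity relative to $\{1, A\}$ is by definition the conjunction of knapsack semilinearity relative to $1$ (i.e.\ ordinary knapsack semilinearity) and knapsack semilinearity relative to $A$, so nothing more needs to be checked.
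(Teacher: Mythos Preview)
Your proposal is correct and matches the paper's approach exactly: the paper simply states that the corollary follows from Theorems~\ref{HNNsemilin} and~\ref{theorem-quasiconvex}, which is precisely the combination you spell out. Your explicit observation that knapsack semilinearity relative to $\{1,A\}$ is by definition the conjunction of the two separate conditions is a helpful clarification that the paper leaves implicit.
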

It is known that every cyclic subgroup of a hyperbolic group is quasiconvex, see e.g.~\cite{Arz01}.
Hence, for every element $a \in G$ of a hyperbolic group $G$, the HNN-extension $\langle G, t \mid t^{-1}at=a\rangle$ 
is knapsack semilinear. It is also known that if the hyperbolic group $G$ is non-elementary (i.e., it contains a copy of the free group $F_2$) then
the centralizer of an element $g \in G$ is cyclic \cite[Lemma~2]{Arz01}. 
Hence, we obtain  Theorem~\ref{thm-ext-centralizer} for  the case of a centralizer of a single element in a
non-elementary hyperbolic group.

\section{Proof of Theorem~\ref{theorem-hyp-main0}}

We now come to the proof of Theorem~\ref{theorem-hyp-main0}. Let $G$ be $\delta$-hyperbolic.
For $1\leq i\leq n$ let $L_i \subseteq \Sigma^*$ be a regular $(\lambda,\epsilon)$-quasigeodesic  language. Let $\mathcal{A}_i = (\mathcal{Q}_i, S_i, \delta_i, T_i)$ be a finite
automaton for $L_i$. Without loss of generality, we can assume that every $q \in \mathcal{Q}_i$ belongs to a path from some initial state $q_0 \in S_i$ to some final state
$q_1 \in T_i$. This ensures that every word that labels a path from a state $p$
to a state $q$ is a factor of a word from $L_i$. Since factors of $(\lambda,\epsilon)$-quasigeodesic words are $(\lambda,\epsilon)$-quasigeodesic as well,
it follows that every word that labels a path between two states of $\mathcal{A}_i$ is $(\lambda,\epsilon)$-quasigeodesic.

We want to show that the set
\begin{equation*} 
\{ (P(w_1), \ldots, P(w_n)) \in \mathbb{N}^{nk} \mid w_i \in L_i \text{ for } 1 \leq i \leq n, w_1 w_2 \cdots w_n =_G 1  \}
\end{equation*}
is semilinear.  For this, we prove a slightly more general statement: For words $v_1, \ldots, v_n \in \Sigma^*$ we
consider the set
\begin{equation*} 
\{ (P(w_1), \ldots, P(w_n)) \in \mathbb{N}^{nk} \mid w_i \in L_i \text{ for } 1 \leq i \leq n, w_1 v_1 \cdots w_n v_n =_G 1  \}.
\end{equation*}
By induction over $n$ we show that this set is semilinear.  
For the case $n=2$ we can directly use Lemma~\ref{lemma-k=2-reg}. This also covers the case $n=1$ since we can take
$L_2 = \{1\}$.

Now assume that $n \geq 3$. We can assume that the words $v_i$ are geodesic. Define the automaton $\mathcal{A}$ as the disjoint union
of the  automata $\mathcal{A}_i$. Thus, the state set of $\mathcal{A}$ is $\mathcal{Q} = \biguplus_{1 \le i \le n} \mathcal{Q}_i$ and the transition set of 
$\mathcal{A}$ is $\delta = \biguplus_{1 \le i \le n} \delta_i$ (the sets of initial and final states of $\mathcal{A}$ are not important).
Let us denote for $p,q \in \mathcal{Q}$ with $L_{p,q}$ the set of all finite words that label a path from $p$ to $q$ in the
automaton $\mathcal{A}$. The above properties of the automata $\mathcal{A}_i$ ensure that every language 
 $L_{p,q}$ is $(\lambda,\epsilon)$-quasigeodesic.
Note that $L_i = \bigcup_{p \in S_i, q \in T_i} L_{p,q}$.
Since the semilinear sets are effectively closed  under union, it suffices to show for states $p_i,q_i \in \mathcal{Q}$ ($1 \leq i \leq n$) that the 
following set is semilinear:
\begin{equation*} 
\{ (P(w_1), \ldots, P(w_n)) \in \mathbb{N}^{nk} \mid w_i \in L_{p_i,q_i} \text{ for } 1 \leq i \leq n, w_1 v_1 \cdots w_n v_n =_G 1  \}.
\end{equation*}
In the following, we denote this set with $P(p_1,q_1,v_1, \ldots, p_n,q_n,v_n)$.
We will construct a Presburger formula with free variables $x_{i,j}$ ($1 \leq i \leq n$, $1 \leq j \leq k$)
for this set.  The variables $x_{i,j}$ with $1 \leq j \leq k$ encode the Parikh image of the words from $L_{p_i,q_i}$.
Let us write $\bar{x}_i = (x_{i,j})_{1 \leq j \leq k}$ in the following.

Recall from Section~\ref{sec-hyp} the definition of the path $P[w]$ for a word $w \in \Sigma^*$.
Consider a tuple $(w_1, \ldots, w_n) \in \prod_{i=1}^n L_{p_i,q_i}$ with
$w_1 v_1 w_2 v_2 \cdots w_n v_n =_G 1$
and the corresponding $2n$-gon that is defined by the $(\lambda,\epsilon)$-quasigeodesic paths
$P_i = (w_1 v_1 \cdots w_{i-1} v_{i-1}) \cdot P[w_i]$ and the geodesic paths
$Q_i = (w_1 v_1 \cdots w_{i}) \cdot P[v_i]$, see Figure~\ref{polygon} for the case $n=3$.
Since all paths $P_i$ and $Q_i$ are $(\lambda,\epsilon)$-quasigeodesic,
we can apply \cite[Lemma 6.4]{MyNiUs14}: Every side of the $2n$-gon is contained in the 
$\kappa$-neighborhoods of the other sides, where $\kappa = \xi+\xi\log(2n)$ for a constant $\xi$ that only depends
on the constants $\delta, \lambda, \varepsilon$.

\newlength{\R}\setlength{\R}{2.7cm}
\begin{figure}[t]
\centering
\begin{tikzpicture}
  [inner sep=.5mm,
  minicirc/.style={circle,draw=black,fill=black,thick}]

  \node (circ1) at ( 45:\R) [minicirc] {};
  \node (circ2) at (135:\R) [minicirc] {};
  \node (circ3) at (165:\R) [minicirc] {};
  \node (circ4) at (255:\R) [minicirc] {};
  \node (circ5) at (285:\R) [minicirc] {};
  \node (circ6) at (15:\R) [minicirc] {};
  \draw [thick] (circ1) to [out=225, in=-45] node[above=1mm] {$L_{p_2,q_2}$} 
                      (circ2) to [out=285, in=15] node[above=0.5mm,left=0.5mm] {$v_1$} 
                      (circ3) to [out=345, in=75] node[below=0.5mm,left=0.5mm] {$L_{p_1,q_1}$} 
                      (circ4) to [out=45, in=135] node[below=1mm] {$v_3$} 
                      (circ5) to [out=105, in=195] node[below=0.5mm,right=0.5mm] {$L_{p_3,q_3}$} 
                      (circ6) to [out=165, in=255] node[right=1mm] {$v_2$} (circ1);
\end{tikzpicture}
\caption{\label{polygon} The $2n$-gon for $n = 3$ from the proof of Theorem~\ref{theorem-hyp-main0}}
\end{figure}
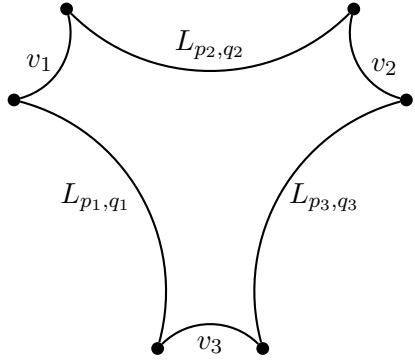

Let us now consider the side $P_2$ of the quasigeodesic $2n$-gon. It is labelled
with a word from $L_{p_2,q_2}$. Its neighboring  sides are $Q_1$ and $Q_2$, which are labelled
with $v_1$ and $v_2$, respectively.
We distinguish several cases. In each case we cut the $2n$-gon into smaller 
pieces along paths of length $\leq \kappa$ (length $2\kappa+1$ in Case~2). 
When we speak of a point on the $2n$-gon, we mean a node of the Cayley graph
(i.e., an element of the group $G$) and not a point in the interior of an edge.

For each of the following six cases we construct a Presburger formula describing a semilinear 
set. The union of these six sets is  $P(p_1,q_1,v_1, \ldots, p_n,q_n,v_n)$.

\medskip
\noindent
{\em Case 1:} There is a point $a \in P_2$ that has distance at most $\kappa$ from a point
$b$ that does not belong to $P_1 \cup Q_1 \cup Q_2 \cup P_3$. Thus $b$ must
belong to one of the paths $Q_3, P_4,  \ldots Q_{n-1},  P_n, Q_n$. Let $w$ be a geodesic
word of length at most $\kappa$ that labels a path from $a$ to $b$.
There are two subcases:

\medskip
\noindent
{\em Case 1.1:} $b$ belongs to a path $Q_i$ with $3 \le i \le n$.  
The situation is shown in Figure~\ref{polygon1.1} for $n=i=3$.
Let $T$ be the set of all tuples $(r, v_{i,1}, v_{i,2}, w)$ such that $r \in \mathcal{Q}$, $v_i = v_{i,1}v_{i,2}$, and $w \in \Sigma^*$ is of length at most $\kappa$.
By induction, the following two sets are semilinear for every tuple $t = (r, v_{i,1}, v_{i,2}, w) \in T$:
\begin{eqnarray*}
S_{t,1}  & = & P(p_1,q_1, v_1, p_2,r, wv_{i,2}, p_{i+1},q_{i+1}, v_{i+1}, \ldots, p_{n},q_{n}, v_{n}), \\
S_{t,2} & = & P(r, q_2, v_2, p_3, q_3, v_3, \ldots, p_i,q_i, v_{i,1} w^{-1}).
\end{eqnarray*}
Intuitively, $S_{t,1}$ corresponds to the $2j_1$-gon
(when $wv_{i,2}$ is viewed as a single side) on the left of the $w$-labelled edge
in Figure~\ref{polygon1.1}, whereas $S_{t,2}$ corresponds to the $2j_2$-gon on the right of the $w$-labelled edge (where $j_1= n-i+2$ and $j_2=i-1$). Note that $j_1,j_2 \leq n-1$.
We then define the formula
\begin{align*}
A_{1.1} =   \bigvee_{t \in T} \exists \bar{y}_{2},\bar{z}_{2} :  \ & (\bar{x}_1, \bar{y}_2, \bar{x}_{i+1}, \ldots, \bar{x}_n) \in S_{t,1}
\ \wedge \  (\bar{z}_2, \bar{x}_3, \ldots, \bar{x}_i) \in S_{t,2}  \ \wedge \  \bar{x}_2 = \bar{y}_{2} + \bar{z}_{2}.
\end{align*}
Here $\bar{y}_2$ and $\bar{z}_2$ are $k$-tuples of  new variables. The Presburger formula $A_{1.1}$ is one of the six formulas whose union
is $P(p_1,q_1,v_1, \ldots, p_n,q_n,v_n)$.

\medskip

\begin{figure}[t]
\centering
\begin{tikzpicture}
  [inner sep=.5mm,
  minicirc/.style={circle,draw=black,fill=black,thick}]
  \tikzstyle{small} = [circle,draw=black,fill=black,inner sep=.3mm]
  \node (circ1) at ( 45:\R) [minicirc] {};
  \node (circ2) at (135:\R) [minicirc] {};
  \node (circ3) at (165:\R) [minicirc] {};
  \node (circ4) at (255:\R) [minicirc] {};
  \node (circ5) at (285:\R) [minicirc] {};
  \node (circ6) at (15:\R) [minicirc] {};
  \draw [thick] (circ1) to [out=225, in=-45] 
     node[pos=.5,small]  (a) {} 
     node[pos=0.3,above=1.5mm] {$L_{r,q_2}$}
     node[pos=0.7,above=1.5mm] {$L_{p_2,r}$}
                      (circ2) to [out=285, in=15] node[above=0.5mm,left=0.5mm] {$v_1$} 
                      (circ3) to [out=345, in=75] node[below=0.5mm,left=0.5mm] {$L_{p_1,q_1}$} 
                      (circ4) to [out=45, in=135] node[below=1mm,pos=.3] {$v_{3,2}$} node[below=1mm,pos=.7] {$\; v_{3,1}$}  node[pos=.5,small]  (b){}
                      (circ5) to [out=105, in=195] node[below=0.5mm,right=0.5mm] {$L_{p_3,q_3}$} 
                      (circ6) to [out=165, in=255] node[right=1mm] {$v_2$} (circ1);
     \draw (a) to node[left=.8mm] {$w$} (b);
\end{tikzpicture}
\caption{\label{polygon1.1} Case~1.1 from the proof of Theorem~\ref{theorem-hyp-main0}}
\end{figure}
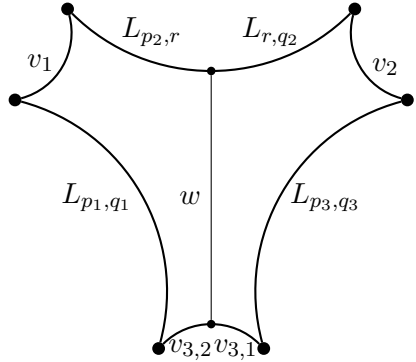

\noindent
{\em Case 1.2:} $b$ belongs to the path $P_i$, where $4 \le i \le n$
(this case can only occur if $n \ge 4$).
This case is analogous to Case 1.1. 
Let $T$ be the set of all tuples $(r,r', w)$ such that $r,r' \in \mathcal{Q}$ and $w \in \Sigma^*$ is of length at most $\kappa$.
By induction, the following two sets are semilinear for every tuple $t = (r,r', w) \in T$:
\begin{eqnarray*}
S_{t,1}  & = & P(p_1,q_1, v_1, p_2,r, w, r', q_i, v_i, p_{i+1},q_{i+1}, v_{i+1}, \ldots, p_{n},q_{n}, v_{n}), \\
S_{t,2}  & = & P(r, q_2, v_2, p_3, q_3, v_3, \ldots, p_{i-1}, q_{i-1}, v_{i-1}, p_i, r', w^{-1}).
\end{eqnarray*}
Moreover, let $A_{1.2}$ be the formula
\begin{align*}
A_{1.2} =   \bigvee_{t \in T} \exists \bar{y}_{2},\bar{z}_{2}, \bar{y}_{i},\bar{z}_{i} :  \ & (\bar{x}_1, \bar{y}_2, \bar{z}_{i}, \bar{x}_{i+1}, \ldots, \bar{x}_n) \in S_{t,1}
\ \wedge \\[-3mm]
&  (\bar{z}_2, \bar{x}_3, \ldots, \bar{x}_{i-1}, \bar{y}_i) \in S_{t,2}  \ \wedge \\
& \bar{x}_2 = \bar{y}_{2} + \bar{z}_{2} \ \wedge \ \bar{x}_i = \bar{y}_{i} + \bar{z}_{i} .
\end{align*}
{\em Case 2:} Every point on $P_2$ has distance at most $\kappa$ from a point
on $P_1 \cup Q_1 \cup Q_2 \cup P_3$. Since the starting point of $P_2$ has distance $0 \leq \kappa$ from $P_1 \cup Q_1$ and the end point of
$P_2$ has distance $0 \leq \kappa$ from $Q_2 \cup P_3$,  there must be points $b_1$ on $P_1 \cup Q_1$, $b$ on $P_2$, and $b_2$ on 
$Q_2 \cup P_3$ such that the distance between $b_1$ and $b$ is at most $\kappa$ and the distance between $b$ and $b_2$ is at most $\kappa+1$.
Hence, the distance between $b_1$ and $b_2$ is at most $2\kappa+1$.
Let $w$ be a word that labels a geodesic path from $b_1$ to $b_2$ (thus, $|w| \le 2\kappa+1$).
This leads to the following four subcases.

\medskip
\noindent
{\em Case 2.1:} $b_1 \in Q_1$ and $b_2 \in Q_2$. This case is shown in Figure~\ref{polygon2.3}.
Let $T$ be the set of all tuples $(v_{1,1},v_{1,2}, w, v_{2,1},v_{2,2})$ such that $v_1 = v_{1,1} v_{1,2}$,
$v_2 = v_{2,1} v_{2,2}$ and $w \in \Sigma^*$ is of length at most $2\kappa+1$.
By induction, the following two sets are semilinear for every tuple $t = (v_{1,1},v_{1,2}, w, v_{2,1},v_{2,2}) \in T$:
\begin{eqnarray*}
S_{t,1} & = & P(p_2,q_2, v_{2,1} w^{-1} v_{1,2}), \\
S_{t,2} & = & P(p_1, q_1, v_{1,1} w v_{2,2}, p_3,q_3,v_3, \ldots,  p_n,q_n,v_n).
\end{eqnarray*}
We define the  formula
\[
A_{2.1} =  \bigvee_{t \in T}  \bar{x}_2 \in S_{t,1}
\ \wedge \  (\bar{x}_1, \bar{x}_3 \ldots, \bar{x}_n) \in S_{t,2}  .
\]

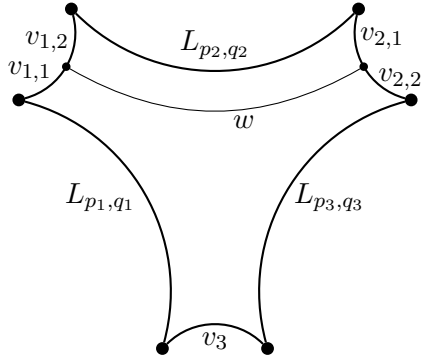
\begin{figure}[t]
\centering
\begin{tikzpicture}
  [inner sep=.5mm,
  minicirc/.style={circle,draw=black,fill=black,thick}]
  \tikzstyle{small} = [circle,draw=black,fill=black,inner sep=.3mm]
  \node (circ1) at ( 45:\R) [minicirc] {};
  \node (circ2) at (135:\R) [minicirc] {};
  \node (circ3) at (165:\R) [minicirc] {};
  \node (circ4) at (255:\R) [minicirc] {};
  \node (circ5) at (285:\R) [minicirc] {};
  \node (circ6) at (15:\R) [minicirc] {};
  \draw [thick] (circ1) to [out=225, in=-45] node[above=1mm] {$L_{p_2,q_2}$} 
                      (circ2) to [out=285, in=15] node[left=.1mm,pos=.25] {$v_{1,2}$} node[above=.5mm,left=.5mm,pos=.6] {$v_{1,1}$}  node[pos=.5,small]  (b1){}
                      (circ3) to [out=345, in=75] node[below=0.5mm,left=0.5mm] {$L_{p_1,q_1}$} 
                      (circ4) to [out=45, in=135] node[below=0.5mm] {$v_3$}
                      (circ5) to [out=105, in=195] node[below=0.5mm,right=0.5mm] {$L_{p_3,q_3}$} 
                      (circ6) to [out=165, in=255] node[right=.1mm,pos=.35] {$v_{2,2}$} node[above=.5mm,right=0mm,pos=.75] {$v_{2,1}$}  node[pos=.5,small]  (b2){}
                      (circ1);
     \draw (b1) to [out=-30, in=210] node[below=.3mm,pos=.6] {$w$} (b2);
\end{tikzpicture}
\caption{\label{polygon2.3} Case~2.1 from the proof of Theorem~\ref{theorem-hyp-main0}}
\end{figure}

\medskip
\noindent
{\em Case 2.2:} $b_1 \in P_1$ and $b_2 \in Q_2$, see Figure~\ref{polygon2.4}.
This case is exactly the same as Case~1.1 with $i=3$, if we replace the side $P_2$ in Case~1.1 by $P_1$; see
Figure~\ref{polygon1.1}.

\begin{figure}[t]
\centering
\begin{tikzpicture}
  [inner sep=.5mm,
  minicirc/.style={circle,draw=black,fill=black,thick}]
  \tikzstyle{small} = [circle,draw=black,fill=black,inner sep=.3mm]
  \node (circ1) at ( 45:\R) [minicirc] {};
  \node (circ2) at (135:\R) [minicirc] {};
  \node (circ3) at (165:\R) [minicirc] {};
  \node (circ4) at (255:\R) [minicirc] {};
  \node (circ5) at (285:\R) [minicirc] {};
  \node (circ6) at (15:\R) [minicirc] {};
  \draw [thick] (circ1) to [out=225, in=-45] node[above=1mm] {$L_{p_2,q_2}$} 
                      (circ2) to [out=285, in=15] node[above=0.5mm,left=0.5mm] {$v_1$} 
                      (circ3) to [out=345, in=75] 
                      node[pos=.5,small]  (b1) {} 
     node[pos=0.33,left=1mm] {$L_{r,q_1}$}
     node[pos=0.7,left=-.5mm] {$L_{p_1,r}$}
                      (circ4) to [out=45, in=135] node[below=0.5mm] {$v_3$}
                      (circ5) to [out=105, in=195] node[below=0.5mm,right=0.5mm] {$L_{p_3,q_3}$} 
                      (circ6) to [out=165, in=255] node[right=.1mm,pos=.35] {$v_{2,2}$} node[above=.5mm,right=0mm,pos=.75] {$v_{2,1}$}  node[pos=.5,small]  (b2){}
                      (circ1);
     \draw (b1) to [out=30, in=210] node[below=.3mm,pos=.6] {$w$} (b2);
\end{tikzpicture}
\caption{\label{polygon2.4} Case~2.2 from the proof of Theorem~\ref{theorem-hyp-main0}}
\end{figure}
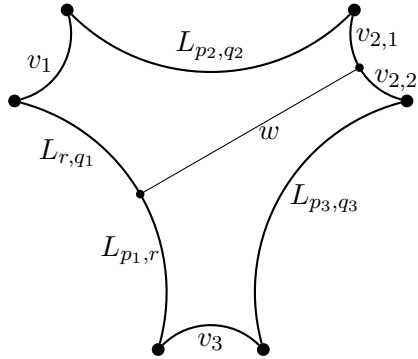

\medskip
\noindent
{\em Case 2.3:} $b_1 \in Q_1$ and $b_2 \in P_3$.
This case is analogous to Case~2.2.

\medskip
\noindent
{\em Case 2.4:} $b_1 \in P_1$ and $b_2 \in P_3$, see Figure~\ref{polygon2.6}.
Let $T$ be the set of all tuples $(w_1, w_2, w, r_1, r_2, r_3)$
such that $|w| \leq 2\kappa+1$, $|w_1| \leq \kappa$, $|w_2| \leq \kappa+1$, $w = w_1^{-1} w_2$ in $G$,
and $r_1, r_2, r_3 \in \mathcal{Q}$.
By induction, the following three sets are semilinear for every tuple $t = (w_1, w_2, w, r_1, r_2, r_3) \in T$:
\begin{eqnarray*}
S_{t,1} & = & P(r_1,q_1,v_1,p_2,r_2,w_1), \\
S_{t,2} & = & P(r_2,q_2,v_2,p_3,r_3,w_2^{-1}), \\
S_{t,3} & = & P(p_1,r_1,w,r_3,q_3, v_3,p_4, q_4,v_4,\ldots, p_n, q_n,v_n).
\end{eqnarray*}
 We define the formula
\begin{align*}
A_{2.4} = \bigwedge_{t \in T} \exists \bar{y}_1, \bar{z}_1, \bar{y}_2, \bar{z}_2, \bar{y}_3, \bar{z}_3 : \  & 
(\bar{z}_1,\bar{y}_2)  \in S_{t,1} \ \wedge \ (\bar{z}_2,\bar{y}_3)  \in S_{t,2} \ \wedge  \\[-3mm]
& (\bar{y}_1,\bar{z}_3, \bar{x}_4, \ldots, \bar{x}_n) \in S_{t,3} \ \wedge \ \bigwedge_{i=1}^3 \bar{x}_i = \bar{y}_i + \bar{z}_i .
\end{align*}

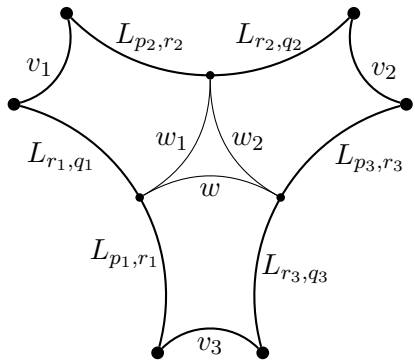
\begin{figure}[t]
\centering
\begin{tikzpicture}
  [inner sep=.5mm,
  minicirc/.style={circle,draw=black,fill=black,thick}]
  \tikzstyle{small} = [circle,draw=black,fill=black,inner sep=.3mm]
  \node (circ1) at ( 45:\R) [minicirc] {};
  \node (circ2) at (135:\R) [minicirc] {};
  \node (circ3) at (165:\R) [minicirc] {};
  \node (circ4) at (255:\R) [minicirc] {};
  \node (circ5) at (285:\R) [minicirc] {};
  \node (circ6) at (15:\R) [minicirc] {};
  \draw [thick] (circ1) to [out=225, in=-45] 
     node[pos=.5,small]  (a) {} 
     node[pos=0.3,above=1.3mm] {$L_{r_2,q_2}$}
     node[pos=0.7,above=1.3mm] {$L_{p_2,r_2}$}
                      (circ2) to [out=285, in=15] node[above=0.5mm,left=0.5mm] {$v_1$} 
                      (circ3) to [out=345, in=75] 
                      node[pos=.5,small]  (b1) {} 
     node[pos=0.33,left=1mm] {$L_{r_1,q_1}$}
     node[pos=0.7,left=-.5mm] {$L_{p_1,r_1}$}
                      (circ4) to [out=45, in=135] node[below=0.5mm] {$v_3$}
                      (circ5) to [out=105, in=195] 
                      node[pos=.5,small]  (b2) {} 
     node[pos=0.25,right=0mm] {$L_{r_3,q_3}$}
     node[pos=0.7,below=1mm,right=1.5mm] {$L_{p_3,r_3}$}
                      (circ6) to [out=165, in=255] node[right=1mm] {$v_2$}
                      (circ1);
        \draw (a) to [out=-90, in=30] node[left=.5mm,pos=.45] {$w_1$} (b1);
        \draw (a) to [out=-90, in=150] node[right=.5mm,pos=.45] {$w_2$} (b2);
        \draw (b1) to [out=30, in=150] node[below=.5mm,pos=.5] {$w$} (b2);
\end{tikzpicture}
\caption{\label{polygon2.6} Case~2.4 from the proof of Theorem~\ref{theorem-hyp-main0}}
\end{figure}

\medskip
\noindent
This concludes the case distinction.

A tuple $(\bar{x}_1, \ldots, \bar{x}_n) \in \N^{n k}$ belongs to the set $P(p_1,q_1,v_1, \ldots, p_n,q_n,v_n)$ if and only if
$A_{1.1} \vee A_{1.2} \vee A_{2.1} \vee A_{2.2} \vee A_{2.3} \vee A_{2.4}$ holds.
This yields a Presburger formula for $P(p_1,q_1,v_1, \ldots, p_n,q_n,v_n)$.
 \qed

\section*{Acknowledgment}
  \noindent 
Both authors were supported by the DFG research project LO 748/12-1.

\bibliographystyle{plainurl}

\def\cprime{$'$} \def\cprime{$'$}

\end{document}